\newcommand{\field}[1]{\mathbb{#1}}
\newcommand{\Z}{\field{Z}}
\newcommand{\cA}{{\cal A}}
\newcommand{\cB}{{\cal B}}
\newcommand{\cC}{{\cal C}}
\newcommand{\cD}{{\cal D}}
\newcommand{\cT}{{\cal T}}
\newcommand{\cR}{{\cal R}}
\newcommand{\cM}{{\cal M}}
\newcommand{\cH}{{\cal H}}
\newtheorem{theorem}{Theorem}
\newtheorem{lemma}{Lemma}
\newtheorem{example}{Example}
\begin{document}

\bibliographystyle{plain}

\title{
\begin{center}
An improved Recursive Construction\\
for Disjoint Steiner Quadruple Systems
\end{center}
}
\author{
{\sc Tuvi Etzion}\thanks{Department of Computer Science, Technion,
Haifa 3200003, Israel, e-mail: {\tt etzion@cs.technion.ac.il}.} \and
{\sc Junling Zhou}\thanks{Department of Mathematics,
Beijing Jiaotong University, Beijing, China,
e-mail: {\tt jlzhou@bjtu.edu.cn}.}}

\maketitle

\begin{abstract}
Let $D(n)$ be the number of pairwise disjoint Steiner quadruple systems.
A simple counting argument shows that $D(n) \leq n-3$ and a set of $n-3$ such
systems is called \emph{a large set}. No nontrivial large set was constructed yet, although it is known
that they exist if $n \equiv 2$ or $4~(\text{mod}~6)$ is large enough.
When $n \geq 7$ and $n \equiv 1$ or $5~(\text{mod}~6)$, we present a recursive construction and prove a recursive formula on $D(4n)$, as follows:
$$
D(4n) \geq 2n + \min \{D(2n) ,2n-7\}.
$$
%If $n$ is an odd prime we provide a complete proof and if it is not an odd prime we only sketch
%the parts of the proof which differ from the case of an $n$ prime.
The related construction has a few advantages over some of the previously known constructions
for pairwise disjoint Steiner quadruple systems.
\end{abstract}

\vspace{0.5cm}

\noindent {\bf Keywords:} Disjoint Steiner systems, large set, Latin square, one-factor, one-factorization,
Steiner quadruple system.

\footnotetext[1] {This research was supported in part by the
111 Project of China (B16002) and in part by the NSFC grants 11571034 and 11971053.
Part of the research was performed during a visit of T. Etzion to Beijing Jiaotong University.
He expresses sincere thanks to the 111 Project of China (B16002) for its support and to the Department
of Mathematics at Beijing Jiaotong University for their kind hospitality.}

%%%%%%%%%%%%%%%%%%%%%%%%%%%%%%%%%%%%%%%%%%%%%%%%%%%%%%%%%%%%%%%%%%%%%%
%%%%%%%%%%%%%%%%%%%%%%%%%%%%%%%%%%%%%%%%%%%%%%%%%%%%%%%%%%%%%%%%%%%%%%
%%%%%%%%%%%%%%%%%%%%%%%%%%%%%%%%%%%%%%%%%%%%%%%%%%%%%%%%%%%%%%%%%%%%%%
\newpage
\section{Introduction}
\label{sec:introduction}

A \emph{Steiner system of order $n$}, $S(t,k,n)$, is a pair $(Q,B)$, where $Q$ is an $n$-set
(whose elements are called \emph{points}) and $B$ is a collection
of $k$-subsets (called \emph{blocks}) of $Q$, such that each $t$-subset of $Q$ is contained
in exactly one block of $B$.

A \emph{large set} of Steiner systems $S(t,k,n)$, on an $n$-set $Q$, is a partition of all $k$-subsets
of $Q$ into Steiner systems $S(t,k,n)$. If we restrict ourself to Steiner systems $S(t-1,t,n)$, then exactly
two families of large sets are solved completely. A Steiner system $S(1,2,n)$ exists if and only if $n$ is even and its
large set is known as a one-factorization of the complete graph $K_n$. The existence of such one-factorizations
is a folklore and a survey can be found in~\cite{Wal97}. A Steiner system $S(2,3,n)$ is known as Steiner triple
system and the corresponding large set is known to exist for every admissible $n \equiv 1$ or $3 ~(\text{mod}~6)$, where $n > 7$.
It was first proved by Lu~\cite{Lu83,Lu84}, who left six open cases which were solved by Teirlinck~\cite{Tei91}.
An alternative shorter proof was given later by Ji~\cite{Ji05}.

A Steiner system $S(3,4,n)$ is also called a Steiner quadruple system of order~$n$
and it is denoted by SQS($n$). These systems which are
of special interest exist if and only if $n \equiv 2$ or $4~(\text{mod}~6)$~\cite{Han60}.
No construction of nontrivial large set of SQS($n$) is known, although it is a common belief that they exist
even for small $n$. Using probabilistic arguments, their existence for large enough $n$, $n \equiv 2$ or $4~(\text{mod}~6)$, was proved recently~\cite{Kee18}.
In practice, it is of interest to construct many disjoint systems.
Let $D(n)$ be the number of pairwise disjoint SQS($n$)s (PDQs of order $n$ in short, the
order is omitted when it is understood).
The size of an SQS($n$) is $\binom{n}{3} \Huge / 4$, and hence $D(n) \leq n-3$
and a large set contains $n-3$ PDQs of order $n$, i.e. $D(n)=n-3$, if a large set exists.

The first lower bound on $D(n)$ was given by Lindner~\cite{Lin77} who proved that
if $n \equiv 2$ or $4~(\text{mod}~6)$, then $D(2n) \geq n$. He further obtained
another bound~\cite{Lin85}, $D(4n) \geq 3n$ if $n \equiv 2$ or $4~(\text{mod}~6)$, where $n \geq 8$.
In a sequence of papers~\cite{KrMe74,Phe91,PhRo80} it was proved that $D(2n) \geq n$ for
$n= 5^a 13^b 17^c$, where $a,b,c \geq 0$ and $a+b+c>0$ and for $n=11$.
Etzion and Hartman~\cite{EtHa91} has proved that $D(2^k n) \geq (2^k -1)n$, $k \geq 2$, if
there exists a set of $3n$ PDQs of order $4n$ with a certain structure. The result can be applied
on $n=11$, $n \equiv 2$ or $4~(\text{mod}~6)$, and $n= 5^a 13^b 17^c$, where $a,b,c \geq 0$ and $a+b+c>0$.
In particular it is proved in~\cite{EtHa91} that $D( 5 \cdot 2^m ) \geq 5 \cdot 2^m -5$, which falls short
by just one system from a large set. This claim follows from the fact that $5 \cdot 2^m -4$ PDQs imply that the remaining
quadruples which are not contained in these PDQs form another SQS($5 \cdot 2^m )$.
The construction in~\cite{EtHa91} is recursive, based
on the existence of 2-chromatic PDQs. A recursive equation for $D(4n)$,
$n \equiv 1$ or $5~(\text{mod}~6)$, was given in~\cite{Etz93}, where it is proved that
$$
D(4n) \geq 2n + \min \{ D(2n),n-2 \}~.
$$
Our goal in this paper is to improve this recursive formula to the following theorem which
is the main result of the paper.

\begin{theorem}
\label{thm:main}
$~$
If $n \geq 7$ and $n \equiv 1$ or $5~(\text{mod}~6)$, then
$D(4n) \geq 2n + \min \{ D(2n),2n-7 \}$.
\end{theorem}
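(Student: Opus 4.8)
The plan is to realize the $4n$ points as $\Z_2 \times Y$ with $|Y| = 2n$, i.e.\ to double from order $2n$ to order $4n$; since $n \equiv 1$ or $5~(\text{mod}~6)$ we have $2n \equiv 2$ or $4~(\text{mod}~6)$, so SQS($2n$) exist and $D(2n)$ is meaningful. I classify every quadruple of $\Z_2 \times Y$ by its layer profile: the \emph{pure} quadruples of types $(4,0)$ and $(0,4)$ (all four points in one layer), the \emph{tripod} quadruples of types $(3,1)$ and $(1,3)$, and the \emph{balanced} quadruples of type $(2,2)$. The goal is to produce $N = 2n + \min\{D(2n),2n-7\}$ pairwise disjoint SQS($4n$) split into two families whose block sets draw on disjoint parts of this classification, so that global disjointness reduces to bookkeeping within each profile.

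\textbf{Family I: the $2n$ systems.} First I would build $2n$ disjoint systems that use \emph{no} pure quadruple at all: in each of them every within-layer $3$-subset $\{(a,0),(b,0),(c,0)\}$ is covered by a tripod block $\{(a,0),(b,0),(c,0),(\sigma(a,b,c),1)\}$, and symmetrically for layer $1$. There are $\binom{2n}{3}\cdot 2n$ quadruples of type $(3,1)$ and each system consumes exactly $\binom{2n}{3}$ of them, which already caps this family at $2n$; the content of the construction is to choose the apex maps $\sigma$ so that the $2n$ systems partition the tripod quadruples exactly and each system is a genuine SQS. The key feasibility condition is that for each fixed pair $\{a,b\}$ the map $c \mapsto \sigma(a,b,c)$ be injective, which is precisely a Latin-square/one-factorization requirement on $K_{2n}$. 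A short count shows each such system then consumes $n(2n-1)$ balanced quadruples to finish off the remaining mixed $3$-subsets.

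\textbf{Family II: the $\min\{D(2n),2n-7\}$ systems.} For the remaining systems I would use the classical doubling construction: place one of the $D(2n)$ pairwise disjoint SQS($2n$) in each layer to cover the within-layer $3$-subsets by pure blocks, and cover the mixed $3$-subsets by balanced $(2,2)$ blocks built from a one-factorization of $K_{2n}$. Because Family~I used no pure quadruple, all $2\binom{2n}{4}$ pure quadruples remain free, which is exactly where the $D(2n)$ bound enters. Each Family-II system uses $n^2(2n-1)$ balanced quadruples, so the only coupling between the two families is through the balanced quadruples, of which there are $\binom{2n}{2}^2 = n^2(2n-1)^2$. Accounting $2n\cdot n(2n-1) + m\cdot n^2(2n-1) \le n^2(2n-1)^2$ gives $m \le 2n-3$, i.e.\ $N \le 4n-3$, recovering the trivial large-set bound; the explicit disjoint packing then realizes $m = \min\{D(2n),2n-7\}$, falling short of a large set by the gap between $2n-3$ and $2n-7$.

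\textbf{Main obstacle.} The heart of the argument, and the step I expect to be hardest, is arranging the balanced $(2,2)$ blocks of \emph{all} $N$ systems to be pairwise disjoint while each individual system remains a valid SQS. This forces the one-factorizations and Latin squares underlying Family~I to be coordinated with the doubling data of Family~II so that no balanced quadruple is reused; the gap between the counting bound $2n-3$ and the achieved $2n-7$ measures the one-factors and patterns one must reserve to make this simultaneous packing explicit for every admissible $n\ge 7$ (so that $2n-7 \ge 7$). Verifying the SQS property of the assembled systems and the exact disjointness of the balanced blocks is the routine-but-lengthy part that the plan defers to explicit Latin-square identities.
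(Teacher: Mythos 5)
Your outline correctly reverse-engineers the architecture of the proof --- your Family I is exactly the set of $2n$ systems produced by Lindner's doubling (DLS) construction from a Latin square of order $2n$, your Family II is the folklore doubling (DB) construction fed by $\min\{D(2n),2n-7\}$ disjoint SQS$(2n)$, and the coupling through the balanced $(2,2)$ quadruples is indeed the crux --- but the proposal stops exactly where the proof begins. Everything you defer as ``routine-but-lengthy'' is the entire mathematical content of the theorem. In particular the constant $2n-7$ is never derived: your counting argument only yields the trivial ceiling $m\le 2n-3$, and nothing in the proposal explains why four further systems must be given up, nor where the hypothesis $n\equiv 1$ or $5~(\text{mod}~6)$, $n\ge 7$ is used beyond the existence of SQS$(2n)$. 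A smaller issue: your stated feasibility condition for Family I (injectivity of $c\mapsto\sigma(a,b,c)$ for fixed $\{a,b\}$) is not enough; the DLS construction needs a Latin square with \emph{no $2\times2$ subsquares}, since otherwise two of the $2n$ systems would share a balanced block of the form $\{(x_1,0),(x_2,0),(\alpha_i(x_1),1),(\alpha_i(x_2),1)\}$.

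The concretely missing steps are three. First, one must fix a specific such Latin square (the paper uses the Kotzig--Lindner--Rosa square $M_n$) and determine exactly which balanced quadruples Family I consumes; the answer is that the layer-$0$ pair and the layer-$1$ pair of a consumed block lie in matched ``difference classes'' $\cA_i,\cB_i$ (differences inside each half of $\{0,\dots,2n-1\}$) and $\cC_j,\cD_j$ (cross pairs). Second --- and this is the real obstacle --- the classes $\cA_i\cup\cB_i$ are unions of odd cycles, not one-factors of $K_{2n}$, so the consumed balanced quadruples are not direct products of one-factors and cannot simply be set aside as rows of a one-factorization Latin square. The paper's Lemmas~\ref{lem:primesR}--\ref{lem:generalRT} repartition each $\cA_i\cup\cB_i\cup\cC_{i-1}$ and each $\cA_i\cup\cB_i\cup\cD_{i-1}$ (each of size $3n$) into three genuine one-factors, exploiting the odd cycle lengths that come from $n$ being odd. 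Third, one must exhibit a $(2n-1)\times(2n-1)$ Latin square on the resulting one-factors whose first \emph{six} rows absorb every direct product touched by Family I: the two ``logical'' rows of consumed products inflate to six after the refinement into triples of one-factors, and $2n-1-6=2n-7$ is precisely the number of untouched rows left to drive Family II. Without these constructions there is no proof, only a correct guess at its shape.
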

The rest of this paper is organized as follows. In Section~\ref{sec:review} we present the
two known recursive doubling construction for SQSs and for a set of PDQs.
The proof of Theorem~\ref{thm:main} is given in Section~\ref{sec:recursion} with its related construction.
We start with any SQS($2n$), a certain Latin square of order $2n$, and apply the first doubling
construction (DLS Construction) to obtain $2n$ PDQs of order $4n$. We analyse the quadruples which are contained in the
constructed $2n$ PDQs and continue with the second doubling construction (DB Construction) to form PDQs of order $4n$ which contain
quadruples which are not contained in the first $2n$ PDQs.
The construction for $n \equiv 5~(\text{mod}~6)$ is slightly simpler to explain than the one
for $n \equiv 1~(\text{mod}~6)$ and also
when $n$ is a prime some of the construction is slightly simpler. We make some distinction between
these cases in the proofs.
Examples of each step are presented throughout the description of the
related recursive construction. In Section~\ref{sec:conclusion} a conclusion, comparison with previous results,
and several problems for future research, are presented.
\vspace{-0.2cm}

\section{Two Recursive Constructions}
\label{sec:review}

In this section we will discuss two main recursive doubling constructions for
SQSs or more precisely, for a set of PDQs.
The first one is a doubling construction due to Lindner~\cite{Lin77}. It will
be called DLS (for Doubling Lindner Systems) Construction. We will present a slightly different variant from the one
given in~\cite{Lin77}. This variant was already presented in~\cite{Etz96}.
The second construction is a folklore doubling construction
which will be called DB (for doubling) Construction.
These two constructions will be the main ingredients
for the other constructions presented in this paper.

For the two constructions we need to define several well-known combinatorial designs
which are essential in the construction used to prove Theorem~\ref{thm:main} in Section~\ref{sec:recursion}.

A $v \times v$ \emph{Latin square} is a $v \times v$ array in which each row and each column
is a permutation of a $v$-set $Q$. A $k \times v$ array is called a \emph{Latin rectangle}
if each row is a permutation of $Q$ and each column contains $k$ distinct elements.
It is well known that such a rectangle can be completed to an $v \times v$ Latin square,
for example by applying the well-known Hall's marriage theorem~\cite{Hal35}. A Latin square has no $2 \times 2$ subsquares if any
$2 \times 2$ subsquare restricted to two rows and two columns does not form a Latin square~\cite{KLR75}.

A \emph{one-factorization} $F=\{F_0,F_1,\ldots,F_{v-2} \}$ of the complete graph $K_v$, where $v$ is an even positive integer,
is partition of the edges of $K_v$ into perfect matchings. Each $F_i$, $0 \leq i \leq v-2$, is a perfect matching of $K_v$,
which is also called a \emph{one-factor}. As mentioned before a one-factor is a Steiner system $S(1,2,v)$
and a one-factorization is a related large set.
Clearly, a one-factor contains $\frac{v}{2}$ pairs of vertices whose
union is the set of $v$ vertices in $K_v$.
\vspace{-0.2cm}

\subsection{DLS Construction}
\label{sec:DLS}

Let $(\Z_v,B)$ be an SQS($v$) and let $A$ be a $v \times v$ Latin square of order $v$, on the point set $\Z_v$, with
no $2 \times 2$ subsquares.
Denote by~$\alpha_i$ the permutation on $\Z_v$ defined by $\alpha_i(j)=y$ if and only if $A(i,j)=y$.
For each $i$, $0 \leq i \leq v-1$, we define a set of quadruples $B_i$ on $\Z_v \times \Z_2$ as follows:

\begin{enumerate}
\item For each quadruple $\{ x_1,x_2,x_3,x_4 \} \in B$, the following 8 quadruples are contained in~$B_i$.
$$
\{ (x_1,0),(x_2,0),(x_3,0),(\alpha_i(x_4),1) \},~~ \{ (x_1,1),(x_2,1),(x_3,1),(\alpha_i(x_4),0) \}
$$
$$
\{ (x_1,0),(x_2,0),(\alpha_i(x_3),1),(x_4,0) \},~~ \{ (x_1,1),(x_2,1),(\alpha_i(x_3),0),(x_4,1) \}
$$
$$
\{ (x_1,0),(\alpha_i(x_2),1),(x_3,0),(x_4,0) \},~~ \{ (x_1,1),(\alpha_i(x_2),0),(x_3,1),(x_4,1) \}
$$
$$
\{ (\alpha_i(x_1),1),(x_2,0),(x_3,0),(x_4,0) \},~~ \{ (\alpha_i(x_1),0),(x_2,1),(x_3,1),(x_4,1) \}
$$

\item For each pair $\{ x_1,x_2 \} \subset \Z_v$, the quadruple
$\{ (x_1,0),(x_2,0),(\alpha_i(x_1),1),(\alpha_i(x_2),1) \}$ is contained in~$B_i$.
\end{enumerate}

This DLS Construction is a variant~\cite{Etz96} of the Lindner Construction~\cite{Lin77}. It yields $v$ PDQs of order $2v$.
\vspace{-0.2cm}

\subsection{DB Construction}
\label{sec:DB}

Let $(\Z_v,B)$ be an SQS($v$), let $F=\{F_0,F_1,\ldots,F_{v-2} \}$ and
$F'=\{F'_0,F'_1,\ldots,F'_{n-2} \}$ be two one-factorizations of $K_v$ on the vertex set $\Z_v$, and
let $\alpha$ be any permutation on the set $\{ 0,1,\ldots,v-2 \}$. We define a collection of quadruples $B'$
on $\Z_v \times \Z_2$ as follows.

\begin{enumerate}
\item For each quadruple $\{ x_1,x_2,x_3,x_4 \} \in B$, the following two quadruples are
contained in~$B'$.
$$
\{ (x_1,0),(x_2,0),(x_3,0),(x_4,0) \},~~ \{ (x_1,1),(x_2,1),(x_3,1),(x_4,1) \}~.
$$

\item For each $i \in \{ 0,1,\ldots,n-2\}$ and $\{ x_1,x_2 \} \in F_i$, $\{ y_1,y_2 \} \in F'_j$,
where $j=\alpha (i)$, the following quadruple is contained in $B'$.
$$
\{ (x_1,0),(x_2,0),(y_1,1),(y_2,1) \}~.
$$
\end{enumerate}
The set of all such quadruples produced by $F_i$ and $F'_j$ are said to be the direct product of $F_i$ and~$F'_j$.
For the construction in Section~\ref{sec:recursion}, it is assumed that instead of one SQS($v$) $B$, we have
a set of $k$ PDQs of order $v$ and instead of $\alpha$ we
use $k$ permutations on $\{ 0,1,\ldots,v-2 \}$ defined by the rows of a $k \times (v-1)$ Latin rectangle.
In this case, the DB construction yields a set of $k$ PDQs of order $2v$.
\vspace{-0.2cm}

\subsection{Configurations}

Both constructions, the DLS Construction and the DB Construction, are based
on a partition of the point set $Q$ into two equal sets, say $P_1$ and $P_2$.
Given such a partition and a quadruple
$X = \{ x_1,x_2,x_3,x_4 \}$ of $Q$, we say that $X$ is a $4$-subset from \emph{configuration} $(i,j)$, where $i+j=4$, if
$|X \cap P_1|=i$ and $|X \cap P_2|=j$. There are clearly five possible configurations,
$(4,0)$, $(3,1)$, $(2,2)$, $(1,3)$, and $(0,4)$. The following trivial lemma can be easily verified.

\begin{lemma}
\label{lem:conf1_3}
In the DLS construction, based on the partition of $\Z_{v}\times\{0,1\}$ into $\Z_v \times \{0\}$ and $\Z_n \times \{1\}$,
each quadruple from configuration $(3,1)$ and each quadruple from configuration $(1,3)$
is contained exactly once in the $v$ PDQs of order $2v$.
\end{lemma}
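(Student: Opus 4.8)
The plan is to reduce the statement to a clean counting bijection that uses the Steiner property of $B$ and the Latin-square property of $A$. First I would note that the quadruples produced in Item~2 of the DLS Construction all lie in configuration $(2,2)$, so they are irrelevant here; every configuration-$(3,1)$ or configuration-$(1,3)$ quadruple occurring in any $B_i$ must come from Item~1. Inspecting the eight quadruples attached to a base block $\{x_1,x_2,x_3,x_4\}\in B$, exactly four lie in configuration $(3,1)$ (those in which a single coordinate is shifted into $\Z_v\times\{1\}$ by $\alpha_i$ while the other three stay in $\Z_v\times\{0\}$), and the remaining four lie in configuration $(1,3)$.

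Next I would make the correspondence precise for configuration $(3,1)$. Fix an arbitrary such quadruple $Q=\{(a,0),(b,0),(c,0),(d,1)\}$, so that $\{a,b,c\}$ is a $3$-subset of $\Z_v$ and $d\in\Z_v$. Because $(\Z_v,B)$ is an $S(3,4,v)$, there is a unique block $Y\in B$ containing $\{a,b,c\}$; write $Y=\{a,b,c,x\}$ with $x\notin\{a,b,c\}$. I claim $Q$ occurs in $B_i$, necessarily as the Item-1 quadruple obtained from $Y$ by shifting its fourth point, precisely when $\alpha_i(x)=d$, i.e. $A(i,x)=d$. Since $A$ is a Latin square, the value $d$ appears exactly once in column $x$, so there is exactly one index $i$ with $A(i,x)=d$. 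This yields both existence and uniqueness: $Q$ appears in exactly one of $B_0,\dots,B_{v-1}$, and appears there exactly once.

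I would then rule out any hidden collision inside a single block or a single $B_i$: the four configuration-$(3,1)$ quadruples arising from a fixed $Y$ in a fixed $B_i$ have pairwise distinct layer-$0$ triples (the four $3$-subsets of $Y$) and are therefore distinct, while two different base blocks cannot produce the same $Q$ because the triple $\{a,b,c\}$ already determines $Y$ through the Steiner property. The configuration-$(1,3)$ case is identical after interchanging the roles of $\Z_v\times\{0\}$ and $\Z_v\times\{1\}$ and using the four Item-1 quadruples of the opposite parity.

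Since this is only a bookkeeping claim there is no genuine obstacle; the one point not to overlook is that uniqueness rests simultaneously on two independent facts—the uniqueness of the block through a triple (Steiner property) and the uniqueness of the row solving $A(i,x)=d$ (Latin-square property)—whereas the ``no $2\times 2$ subsquare'' hypothesis is not used here and is needed only for the configuration-$(2,2)$ analysis elsewhere. As a sanity check I would confirm the global count: each base block contributes four configuration-$(3,1)$ quadruples per $B_i$, giving $4\cdot(\binom{v}{3}/4)\cdot v=v\binom{v}{3}$ in total, which equals the number $v\binom{v}{3}$ of configuration-$(3,1)$ quadruples of $\Z_v\times\Z_2$, consistent with the ``exactly once'' conclusion.
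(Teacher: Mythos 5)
Your proof is correct and supplies exactly the verification the paper leaves to the reader (the paper states Lemma~\ref{lem:conf1_3} without proof, calling it trivial): the uniqueness of the block through the triple $\{a,b,c\}$ combined with the uniqueness of the row $i$ solving $A(i,x)=d$ is the intended argument, and your observation that the no-$2\times 2$-subsquare hypothesis is not needed here is also accurate.
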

\vspace{-0.2cm}

\section{An Improved Recursive Construction}
\label{sec:recursion}

In this section a recursive doubling construction for PDQs of order $4n$,
where $n$ is an odd integer not divisible by 3, is presented.
The construction proceeds in two parts, where we employ the two doubling constructions with $v=2n$, $n\equiv 1$ or 5 (mod 6). In the first part
the DLS Construction is applied with an appropriate $(2n) \times (2n)$ Latin square.
A set of $2n$ PDQs of order $4n$ is obtained in this part.
By Lemma~\ref{lem:conf1_3}, each quadruple from configurations $(3,1)$ and each quadruple
from configuration $(1,3)$ is contained in these $2n$ PDQs.
We analyse the quadruples from configuration $(2,2)$ contained in these $2n$ PDQs.
In the second part, the DB Construction is applied with a set of $k$ PDQs of order $2n$ and
a $k \times (2n-1)$ Latin rectangle which yields quadruples from configuration $(2,2)$
which are not obtained in the first part of the construction. The final outcome
from the two parts of the construction are $2n+k$ PDQs of order $4n$ which will imply Theorem~\ref{thm:main}.

\subsection{PDQs from the DLS Construction}
\label{sec:part1}

The first step in the first part of the construction is to define a $(2n) \times (2n)$ Latin square
to be used in the DLS Construction. This Latin square was defined in~\cite{KLR75} and used
in~\cite{Etz93} for constructions of PDQs of order $4n$.
Let $A_n$, $B_n$, be the two $n \times n$ Latin squares defined as follows.

$$
A_n (i,j) \equiv i+j-2~(\text{mod}~n), ~~~~ 1 \leq i,j \leq n,
$$
$$
B_n (i,j) \equiv i-j~(\text{mod}~n), ~~~~ 1 \leq i,j \leq n,
$$
where the values in $A_n$ and $B_n$ are reduced to the range $\{0,1,\ldots,n-1\}$.

Let $C_n$ and $D_n$ be the following two $n \times n$ Latin squares, where
for each $0 \leq i,j \leq n-1$, we have

$$
C_n(i,j) = A_n(i,j)~\text{reduced~modulo}~n~\text{to~the~range}~ \{n,n+1,\ldots,2n-1\},
$$
$$
D_n(i,j) = A_n(i,j)-1~\text{reduced~modulo}~n~\text{to~the~range}~\{n,n+1,\ldots,2n-1\}.
$$

The $(2n) \times (2n)$ Latin square $M_n$ which will be used in the DLS Construction is
given by
{\large
$$
M_n = \left[
\begin{array}{cc}
B_n & C_n \\
D_n & B_n
\end{array}
\right]
$$
}
It was proved in~\cite{KLR75} that $M_n$ has no $2 \times 2$ subsquares.

\begin{example}
\label{ex:M7}
Throughout this section, we will concentrate on a few examples, for $n=7$, $n=11$, an also $n=19$.

For $n=7$, the $14 \times 14$ Latin square $M_7$ is given by
$$
\left[
\begin{array}{ccccccccccccccc}
0 & 6 & 5 & 4 & 3 & 2 & 1 &  & 7 & 8 & 9 & 10 & 11 & 12 & 13\\
1 & 0 & 6 & 5 & 4 & 3 & 2 &  & 8 & 9 & 10 & 11 & 12 & 13 & 7\\
2 & 1 & 0 & 6 & 5 & 4 & 3 &  & 9 & 10 & 11 & 12 & 13 & 7 & 8\\
3 & 2 & 1 & 0 & 6 & 5 & 4 &  & 10 & 11 & 12 & 13 & 7 & 8 & 9\\
4 & 3 & 2 & 1 & 0 & 6 & 5 &  & 11 & 12 & 13 & 7 & 8 & 9 & 10\\
5 & 4 & 3 & 2 & 1 & 0 & 6 &  & 12 & 13 & 7 & 8 & 9 & 10 & 11\\
6 & 5 & 4 & 3 & 2 & 1 & 0 &  & 13 & 7 & 8 & 9 & 10 & 11 & 12\\ \\
13 & 7 & 8 & 9 & 10 & 11 & 12 &  & 0 & 6 & 5 & 4 & 3 & 2 & 1\\
7 & 8 & 9 & 10 & 11 & 12 & 13 &  & 1 & 0 & 6 & 5 & 4 & 3 & 2\\
8 & 9 & 10 & 11 & 12 & 13 & 7 &  & 2 & 1 & 0 & 6 & 5 & 4 & 3\\
9 & 10 & 11 & 12 & 13 & 7 & 8 &  & 3 & 2 & 1 & 0 & 6 & 5 & 4\\
10 & 11 & 12 & 13 & 7 & 8 & 9 &  & 4 & 3 & 2 & 1 & 0 & 6 & 5\\
11 & 12 & 13 & 7 & 8 & 9 & 10 &  & 5 & 4 & 3 & 2 & 1 & 0 & 6\\
12 & 13 & 7 & 8 & 9 & 10 & 11 &  & 6 & 5 & 4 & 3 & 2 & 1 & 0
\end{array}
\right]
$$

\end{example}

\subsection{PDQs from the DB Construction}
\label{sec:part2}

Now, we apply the DB Construction and obtain PDQs which are also disjoint
from the ones obtained in the first part via the DLS Construction. To this end,
we have first to analyse the quadruples from configuration $(2,2)$ which
are contained in the $2n$ PDQs obtained in the DLS Construction.
The reason is that by Lemma~\ref{lem:conf1_3} we can ignore quadruples from configuration
$(3,1)$ and configuration $(1,3)$ and clearly no quadruple from configurations
$(4,0)$ and $(0,4)$ is obtained in the DLS construction.

\subsubsection{Quadruples from Configuration $(2,2)$ in the DLS Construction}

The quadruples from configuration $(2,2)$ which are contained in the $2n$ PDQs of order $4n$
from the DLS Construction can be defined by observing the properties of
the Latin square $M_n$. First, note that four sets of pairs are defined.

$$
\cA_i \triangleq \{ \{ x,y \} ~:~ x,y \in \Z_n, ~ y - x \equiv i~(\text{mod}~n) \},~~ 1 \leq i \leq \frac{n-1}{2},
$$
$$
\cB_i \triangleq \{ \{ x,y \} ~:~ x,y \in \{n,n+1,\ldots,2n-1\}, ~ y - x \equiv i~(\text{mod}~n) \},~~ 1 \leq i \leq \frac{n-1}{2},
$$
$$
\cC_i \triangleq \{ \{x,y\}  ~:~  0 \leq x \leq n-1,~ n \leq y \leq 2n-1,~ x+y \equiv i~(\text{mod}~n)  \},~~0 \leq i \leq n-1,
$$
$$
\cD_i \triangleq \{ \{x,y\}  ~:~  0 \leq x \leq n-1,~ n \leq y \leq 2n-1,~ y-x \equiv i~(\text{mod}~n)  \},~~0 \leq i \leq n-1.
$$

\begin{example}
For $n=7$, the sets $\cA_i$'s, $\cB_i$'s, $\cC_i$'s, and $\cD_i$'s, are as follows.
$$
\cA_1 = \{ \{0,1\},\{1,2\},\{2,3\},\{3,4\},\{4,5\},\{5,6\},\{6,0\} \}
$$
$$
\cA_2 = \{ \{0,2\},\{1,3\},\{2,4\},\{3,5\},\{4,6\},\{5,0\},\{6,1\} \}
$$
$$
\cA_3 = \{ \{0,3\},\{1,4\},\{2,5\},\{3,6\},\{4,0\},\{5,1\},\{6,2\} \}
$$
\vspace{0.1cm}
$$
\cB_1 = \{ \{7,8\},\{8,9\},\{9,10\},\{10,11\},\{11,12\},\{12,13\},\{13,7\} \}
$$
$$
\cB_2 = \{ \{7,9\},\{8,10\},\{9,11\},\{10,12\},\{11,13\},\{12,7\},\{13,8\} \}
$$
$$
\cB_3 = \{ \{7,10\},\{8,11\},\{9,12\},\{10,13\},\{11,7\},\{12,8\},\{13,9\} \}
$$
\vspace{0.1cm}
$$
\cC_0 = \{ \{0,7\},\{1,13\},\{2,12\},\{3,11\},\{4,10\},\{5,9\},\{6,8\} \}
$$
$$
\cC_1 = \{ \{0,8\},\{1,7\},\{2,13\},\{3,12\},\{4,11\},\{5,10\},\{6,9\} \}
$$
$$
\cC_2 = \{ \{0,9\},\{1,8\},\{2,7\},\{3,13\},\{4,12\},\{5,11\},\{6,10\} \}
$$
$$
\cC_3 = \{ \{0,10\},\{1,9\},\{2,8\},\{3,7\},\{4,13\},\{5,12\},\{6,11\} \}
$$
$$
\cC_4 = \{ \{0,11\},\{1,10\},\{2,9\},\{3,8\},\{4,7\},\{5,13\},\{6,12\} \}
$$
$$
\cC_5 = \{ \{0,12\},\{1,11\},\{2,10\},\{3,9\},\{4,8\},\{5,7\},\{6,13\} \}
$$
$$
\cC_6 = \{ \{0,13\},\{1,12\},\{2,11\},\{3,10\},\{4,9\},\{5,8\},\{6,7\} \}
$$
\vspace{0.1cm}
$$
\cD_0 = \{ \{0,7\},\{1,8\},\{2,9\},\{3,10\},\{4,11\},\{5,12\},\{6,13\} \}
$$
$$
\cD_1 = \{ \{0,8\},\{1,9\},\{2,10\},\{3,11\},\{4,12\},\{5,13\},\{6,7\} \}
$$
$$
\cD_2 = \{ \{0,9\},\{1,10\},\{2,11\},\{3,12\},\{4,13\},\{5,7\},\{6,8\} \}
$$
$$
\cD_3 = \{ \{0,10\},\{1,11\},\{2,12\},\{3,13\},\{4,7\},\{5,8\},\{6,9\} \}
$$
$$
\cD_4 = \{ \{0,11\},\{1,12\},\{2,13\},\{3,7\},\{4,8\},\{5,9\},\{6,10\} \}
$$
$$
\cD_5 = \{ \{0,12\},\{1,13\},\{2,7\},\{3,8\},\{4,9\},\{5,10\},\{6,11\} \}
$$
$$
\cD_6 = \{ \{0,13\},\{1,7\},\{2,8\},\{3,9\},\{4,10\},\{5,11\},\{6,12\} \}
$$
\end{example}

The following two lemmas are immediate observations from the definitions of the
$\cA_i$'s, $\cB_i$'s, $\cC_i$'s, $\cD_i$'s, and $M_n$.

\begin{lemma}
\label{lem:pairsDLS}
By using the Latin square $M_n$, the following sets of quadruples are contained, in the first $2n$ PDQs
obtained via the DLS Construction.
$$
\{ (v,0),(x,0),(y,1),(z,1) \},~~\{v,x \} \in \cA_i \cup \cB_i,~~\{y,z \} \in \cA_i \cup \cB_i,~ 1 \leq i \leq \frac{n-1}{2},
$$
$$
\{ (v,0),(x,0),(y,1),(z,1) \},~~\{v,x \} \in \cC_i,~~\{y,z \} \in \cD_{i-1} \cup \cD_i,~ 0 \leq i \leq n-1,
$$
where subscripts are taken~ modulo $n$.
Each one of these quadruples is contained in exactly one of these $2n$ PDQs.
No other quadruple from configuration $(2,2)$ is contained in these $2n$ PDQs.
\end{lemma}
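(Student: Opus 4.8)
The plan is to track exactly which configuration $(2,2)$ quadruples the DLS Construction produces by exploiting the block structure of the Latin square $M_n$. First I would note that among the quadruples listed in the DLS Construction, all eight quadruples of item~1 lie in configuration $(3,1)$ or $(1,3)$, so every configuration $(2,2)$ quadruple in the $2n$ PDQs comes from item~2 and has the shape $\{(x_1,0),(x_2,0),(\alpha_i(x_1),1),(\alpha_i(x_2),1)\}$ for some row index $i\in\{0,\dots,2n-1\}$ and some pair $\{x_1,x_2\}\subset\Z_{2n}$. Thus its $0$-side pair is $\{x_1,x_2\}$ and its $1$-side pair is the image $\{\alpha_i(x_1),\alpha_i(x_2)\}$. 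Since the DLS Construction yields PDQs, within one $B_i$ distinct pairs give distinct quadruples (the $0$-coordinates recover the pair and $\alpha_i$ is a bijection), and no quadruple lies in two different $B_i$'s; hence each produced quadruple occurs exactly once, which already settles the ``exactly once'' clause. The problem then reduces to identifying, as $i$ and $\{x_1,x_2\}$ vary, which ($0$-pair, $1$-pair) combinations arise.

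Next I would read off the action of each permutation $\alpha_i$ (the $i$-th row of $M_n$) from the $2\times 2$ block form. The two copies of $B_n$ show that for $0\le i\le n-1$ the map $\alpha_i$ preserves each half of $\Z_{2n}$, acting on the first half by $x\mapsto i-x\pmod n$ and on the second half by a shift $y\mapsto i+y\pmod n$ that keeps it in the second half; the blocks $C_n$ and $D_n$ show that for $n\le i\le 2n-1$ the map $\alpha_i$ interchanges the two halves. The only delicate point is the ``$-1$'' appearing in the definition of $D_n$, which is precisely what will split the mixed case into two adjacent $\cD$-classes.

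Then comes the forward classification, by cases on the $0$-pair $\{x_1,x_2\}$. If both points lie in a single half, then $\{x_1,x_2\}\in\cA_j\cup\cB_j$, where $j$ is its absolute mod-$n$ difference; since each restriction of $\alpha_i$ between halves is affine of the form $z\mapsto\pm z+c$, it preserves the absolute mod-$n$ difference, so the image pair again lies in $\cA_j\cup\cB_j$ with the same index $j$, giving the first family. If $\{x_1,x_2\}$ is mixed, say in $\cC_s$ with $s$ its mod-$n$ sum, a short computation of the first-half/second-half difference of the image yields $s$ for the top rows (image in $\cD_s$) and $s-1$ for the bottom rows (image in $\cD_{s-1}$), exactly because of the ``$-1$'' noted above; hence the image lies in $\cD_{s-1}\cup\cD_s$, giving the second family. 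This shows every produced configuration $(2,2)$ quadruple belongs to one of the two listed families.

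Finally, for completeness I would close the argument by a count. The produced quadruples number $2n\binom{2n}{2}=2n^2(2n-1)$ and are pairwise distinct by the disjointness argument above; on the other side the two families are disjoint, since their $0$-pairs are respectively within-half and mixed, and they contain $2n^2(n-1)$ and $2n^3$ distinct quadruples, for the same total $2n^2(2n-1)$. As the two totals agree and every produced quadruple sits in a family, the listed families coincide with the produced set: every listed quadruple occurs exactly once and no other configuration $(2,2)$ quadruple does. I expect the main obstacle to be the careful mod-$n$ bookkeeping in the mixed case, in particular isolating the off-by-one between top and bottom rows that produces the union $\cD_{i-1}\cup\cD_i$ rather than a single class.
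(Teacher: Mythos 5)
Your proof is correct and supplies exactly the verification that the paper leaves implicit (the paper states this lemma as an ``immediate observation'' with no written proof): the case analysis of the row actions of $M_n$ on the two halves, together with the closing count $2n^2(n-1)+2n^3=2n^2(2n-1)=2n\binom{2n}{2}$, correctly delivers both the ``exactly once'' clause and the ``no other quadruple from configuration $(2,2)$'' clause. One small slip of attribution: for rows $0\le i\le n-1$ the action on the second half is governed by the block $C_n$ (not the second copy of $B_n$), and for rows $n\le i\le 2n-1$ the half-swap is effected by $D_n$ together with the bottom-right copy of $B_n$ (not $C_n$); the formulas you actually use ($x\mapsto i-x$, the shift on the second half, and the ``$-1$'' from $D_n$ producing $\cD_{i-1}$) are nevertheless the right ones, so nothing downstream is affected.
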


Note, that Lemma~\ref{lem:pairsDLS} implies that the quadruples from configuration $(2,2)$ formed by the
first $2n$ PDQs are of the form $\{ (v,0),(x,0),(y,1),(z,1) \}$.
The pair $\{ v,x \}$ is from one of the $2n-1$ sets $\cA_i$,
$\cB_i$, $1 \leq i \leq \frac{n-1}{2}$, or $\cC_j$, $0 \leq j \leq n-1$.
The pair $\{ y,z \}$ is from one of the $2n-1$ sets $\cA_i$,
$\cB_i$, $1 \leq i \leq \frac{n-1}{2}$, or $\cD_j$, $0 \leq j \leq n-1$.
These facts will be used in the sequel. Now, we state the second simple lemma which can be easily verified.

\begin{lemma}
\label{lem:one-factors}
$~$
\begin{enumerate}
\item Each one of the $\cC_i$'s and the $\cD_i$'s is a one-factor of $K_{2n}$.

%\item Each one of the $\cA_i$'s and the $\cB_i$'s is not a one-factor of $K_{2n}$, but it contains
%$n$ pairs as any one-factor of $K_{2n}$.

\item $\bigcup_{i=1}^{(n-1)/2} (\cA_i \cup \cB_i) \cup \bigcup_{i=0}^{n-1} \cC_i = \{ \{x,y\} ~:~ 0 \leq x<y \leq 2n-1 \}$.

\item $\bigcup_{i=1}^{(n-1)/2} (\cA_i \cup \cB_i) \cup \bigcup_{i=0}^{n-1} \cD_i = \{ \{x,y\} ~:~ 0 \leq x<y \leq 2n-1 \}$.

\item $\sum_{i=1}^{(n-1)/2} (|\cA_i| +|\cB_i|)+ \sum_{i=0}^{n-1} |\cC_i| = n(2n-1)$.

\item $\sum_{i=1}^{(n-1)/2} (|\cA_i| +|\cB_i|)+ \sum_{i=0}^{n-1} |\cD_i| = n(2n-1)$.
\end{enumerate}
\end{lemma}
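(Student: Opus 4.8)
The plan is to treat all five parts as elementary counting statements that flow from a single reparametrization of the vertex set. I would split $\Z_{2n}$ into the \emph{bottom half} $L=\{0,1,\dots,n-1\}$ and the \emph{top half} $H=\{n,n+1,\dots,2n-1\}$, and identify $H$ with $\Z_n$ via $y=n+y'$. Under this identification a cross pair $\{x,y\}$ (with $x\in L$ and $y\in H$) corresponds to $(x,y')\in\Z_n\times\Z_n$, and the congruences defining $\cC_i$ and $\cD_i$ become $x+y'\equiv i$ and $y'-x\equiv i\pmod n$ respectively, since $x+y\equiv x+y'$ and $y-x\equiv y'-x\pmod n$. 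The whole argument rests on $n$ being odd (forced by $n\equiv 1$ or $5\pmod 6$), which I will use repeatedly.

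For Part~1 I would fix $i$ and observe that the equation $x+y'\equiv i\pmod n$ (respectively $y'-x\equiv i\pmod n$) defines a bijection $x\mapsto y'$ of $\Z_n$. Hence $\cC_i$ (respectively $\cD_i$) consists of exactly $n$ cross edges in which every vertex of $L$ and every vertex of $H$ occurs exactly once; that is precisely a perfect matching, i.e.\ a one-factor of $K_{2n}$. This also records $|\cC_i|=|\cD_i|=n$ for later use.

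For Parts~2 and~3 I would partition the edge set of $K_{2n}$ into the edges inside $L$, the edges inside $H$, and the $n^2$ cross edges. The cross edges are in bijection with $\Z_n\times\Z_n$ and, classified by the value of $x+y'\pmod n$ (respectively $y'-x\pmod n$), are partitioned exactly by the $\cC_i$'s (respectively the $\cD_i$'s). For the edges inside $L$, each unordered pair $\{x,y\}\subset\Z_n$ has $y-x\equiv\pm i\pmod n$ for a unique $i\in\{1,\dots,\frac{n-1}{2}\}$; because $n$ is odd one has $i\ne n-i$, so these $\frac{n-1}{2}$ difference classes partition the $\binom{n}{2}$ pairs, and $\bigcup_i\cA_i$ is exactly the set of edges inside $L$. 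The same argument with $\cB_i$ handles the edges inside $H$. Taking the union of the three classes yields the full edge set, giving Parts~2 and~3.

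Finally, to count I note that the oddness of $n$ also gives $|\cA_i|=|\cB_i|=n$: as $x$ ranges over $\Z_n$ the translates $\{x,x+i\}$ are $n$ distinct unordered pairs, since $\{x,x+i\}=\{x',x'+i\}$ would force $2i\equiv 0\pmod n$. Parts~4 and~5 then reduce to the arithmetic $\frac{n-1}{2}(n+n)+n\cdot n=n(n-1)+n^2=n(2n-1)$, which is just $\binom{2n}{2}$; this equality also confirms that the unions in Parts~2 and~3 are in fact disjoint partitions. I do not expect any real obstacle here: every part is a direct bijection or translate-counting argument, and the only point that genuinely must be invoked—in essentially every step—is that $n$ is odd, which guarantees both that each $\cC_i,\cD_i$ is a matching on the full vertex set and that the difference classes $\{i,n-i\}$ and the translate families behave as claimed.
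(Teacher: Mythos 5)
Your proof is correct, and since the paper states this lemma without proof (it is presented as an immediate observation that ``can be easily verified''), your direct counting argument is exactly the verification the authors intend the reader to supply. All steps check out: the bijections defining each $\cC_i$ and $\cD_i$ as perfect matchings, the partition of edges into the two halves plus cross edges, the use of $n$ odd to ensure $i\neq n-i$ and $2i\not\equiv 0\pmod n$, and the final count $n(n-1)+n^2=n(2n-1)=\binom{2n}{2}$.
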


\subsubsection{One-Factorizations for the DB Construction}

The second part of the construction is to use the DB Construction to form more PDQs or order $4n$ with quadruples which were not
used in the first $2n$ PDQs. Such a set of new PDQs of order $4n$ depends on the value of $D(2n)$.
These PDQs of order $2n$ will form the quadruples from configuration $(4,0)$
and from configuration $(0,4)$ in the recursive construction. We distinguish now between the case
$n \equiv 5~(\text{mod}~6)$ and the case
$n \equiv 1~(\text{mod}~6)$.

\begin{lemma}
\label{lem:primesR}
$~$
Let $n\equiv 1$ or $5~(\text{mod}~6)$ be a prime. Then  there exists a one-factorization $\cR$ of $K_{2n}$
on the vertex set $\{0,1,\ldots, 2n-1\}$ satisfying the followings:
\begin{enumerate}
\item  if $n\equiv 5~(\text{mod}~6)$, then ${\cal R}=\{{\cal R}_{i,j}:1\le i\le {2n-1\over 3}, ~ 1 \leq j \leq 3 \},$ where
\begin{align}\label{redistr}{\cal R}_{i,1}\cup {\cal R}_{i,2}\cup {\cal R}_{i,3}={\cal A}_i\cup {\cal B}_i\cup{\cal C}_{i-1},1\le i\le {n-1\over 2},\end{align}
  and $${\cR}_{i,1}={\cC}_{3i-n-2}, {\cal R}_{i,2}= {\cC}_{3i-n-1}, {\cal R}_{i,3}={\cal C}_{3i-n},{n+1\over 2}\le i\le {2n-1\over 3}.$$

\item  if $n\equiv 1~(\text{mod}~6)$, then ${\cal R}=\{{\cal R}_{i,j}:1\le i\le {n-1\over 2}, ~ 1 \leq j \leq 3\}\cup\{{\cal C}_i:{n-1\over2}\le i\le n-1\}$, where Eq. {\rm (\ref{redistr})} also holds.
\end{enumerate}
\end{lemma}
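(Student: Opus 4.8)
The plan is to reduce the statement to a single, concrete edge-colouring problem and then solve it explicitly. First I would observe that every factor of $\cR$ that is declared to equal some $\cC_j$ is already a one-factor of $K_{2n}$ by Lemma~\ref{lem:one-factors}(1); this disposes of the factors $\cR_{i,j}$ in the range $\frac{n+1}{2}\le i\le\frac{2n-1}{3}$ when $n\equiv 5~(\text{mod}~6)$, and of the factors $\cC_i$, $\frac{n-1}{2}\le i\le n-1$, when $n\equiv 1~(\text{mod}~6)$. Hence the whole content of the lemma is to show that, for each $i$ with $1\le i\le\frac{n-1}{2}$, the edge set $\cA_i\cup\cB_i\cup\cC_{i-1}$ of the defining equation~(\ref{redistr}) can be partitioned into three one-factors $\cR_{i,1},\cR_{i,2},\cR_{i,3}$ of $K_{2n}$. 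In parallel I would record the index bookkeeping: equation~(\ref{redistr}) uses $\cC_0,\dots,\cC_{(n-3)/2}$, while the directly-used factors supply $\cC_{(n-1)/2},\dots,\cC_{n-1}$ (for $n\equiv 5$ this follows because $3i-n-2$ runs through exactly these indices three at a time as $i$ increases). Together with all $\cA_i,\cB_i$, $1\le i\le\frac{n-1}{2}$, Lemma~\ref{lem:one-factors}(2) then guarantees that the $2n-1$ factors of $\cR$ cover every edge of $K_{2n}$ exactly once, so $\cR$ is a one-factorization as soon as each individual piece is a one-factor.

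For the core claim, fix $i$ and set $G_i=\cA_i\cup\cB_i\cup\cC_{i-1}$. Because $n$ is prime and $1\le i\le\frac{n-1}{2}$ we have $\gcd(i,n)=1$, so $\cA_i$ is a single $n$-cycle on $\{0,\dots,n-1\}$, $\cB_i$ is a single $n$-cycle on $\{n,\dots,2n-1\}$, and $\cC_{i-1}$ is a perfect matching joining the two halves; thus $G_i$ is a cubic graph on $2n$ vertices. A cubic graph decomposes into three one-factors precisely when it is $3$-edge-colourable, and this holds whenever it has a Hamiltonian cycle: since that cycle has the even length $2n$, its edges split into two one-factors, and the complementary $n$ edges form the third. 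So it suffices to prove that $G_i$ is Hamiltonian.

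To exhibit a Hamiltonian cycle I would use the coprimality of $i$ and $n$ to relabel each half by its position along its own cycle, writing the first half as $\phi(0),\dots,\phi(n-1)$ with $\phi(p)\equiv pi~(\text{mod}~n)$ and the second half as $\psi(0),\dots,\psi(n-1)$ with $\psi(q)-n\equiv qi~(\text{mod}~n)$. In these coordinates both cycles become standard $n$-cycles, and the matching $\cC_{i-1}$ becomes the reflection $\phi(p)\leftrightarrow\psi(c-p)$ with $c\equiv 1-i^{-1}~(\text{mod}~n)$. The Hamiltonian cycle then runs along the first cycle through $\phi(0),\phi(1),\dots,\phi(n-1)$, crosses by a matching edge to $\psi(c+1)$, runs the long way around the second cycle $\psi(c+1),\psi(c+2),\dots,\psi(c)$, and closes by the matching edge from $\psi(c)$ to $\phi(0)$. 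This is a genuine spanning cycle precisely because the two endpoints reached on the second half, $\psi(c)$ and $\psi(c+1)$, are adjacent there, so deleting the single edge between them leaves exactly a Hamiltonian path. Two-colouring this even cycle and adjoining its complementary perfect matching yields the required $\cR_{i,1},\cR_{i,2},\cR_{i,3}$.

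The main obstacle is exactly the Hamiltonicity of $G_i$: two odd cycles glued by a matching need not in general admit a Hamiltonian cycle, and the construction succeeds only because the reflection structure forces the two matching endpoints on the second cycle to be neighbours. Once that structural fact is isolated by the reparametrisation, everything else is routine. The only remaining care is the modular index arithmetic in the $n\equiv 5~(\text{mod}~6)$ case, needed to confirm that the directly-used $\cC_j$'s complement exactly the $\cC_j$'s consumed by the triples, so that no edge of $K_{2n}$ is covered twice or omitted.
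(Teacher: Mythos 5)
Your proposal is correct and is essentially the paper's own argument in different clothing: the paper deletes one edge from each of the odd cycles $\cA_i$ and $\cB_i$ chosen so that the four freed endpoints are paired by two edges of $\cC_{i-1}$, splits each resulting even path into its two alternating near-perfect matchings, and patches with those two $\cC_{i-1}$ edges --- which is exactly the two-colouring of your Hamiltonian cycle, with your complementary matching playing the role of the paper's $\cR_{i,3}$. The differences are cosmetic (a different choice of deleted edges, and the conclusion phrased via $3$-edge-colourability of the cubic graph $\cA_i\cup\cB_i\cup\cC_{i-1}$ rather than by exhibiting the three factors directly), and your index bookkeeping for the leftover $\cC_j$'s matches the paper's.
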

\begin{proof}
By Lemma~\ref{lem:one-factors}, each ${\cC}_i$ is a one-factor of $K_{2n}$ and $\bigcup_{i=1}^{n-1\over 2}({\cal A}_i\cup{\cal B}_i)\cup(\bigcup_{i=0}^{n-1}{\cal C}_i)$ covers all pairs of $\{0,1,\ldots, 2n-1\}$. So, to prove the conclusion, we only need to show that there exist one-factors ${\cal R}_{i,j}$ with $1\le i\le {2n-1\over 3}$ and $j=1,2,3$ such that Eq. (\ref{redistr}) holds.

For a given $1\le i\le {n-1\over 2}$, form the sequences $s_j =\{j\cdot i, (j + 1) \cdot i\}, 0\le j \le n - 1$,
where the elements are taken modulo $n$, reduced to the range $\{0, 1, \ldots , n -1\}$.
Form the second sequence $q_j =\{ j \cdot i, (j +1) \cdot i\}, 0\le j \le n - 1$, where elements are
taken modulo $n$, reduced to the range $\{n, n + 1, \ldots , 2n -1\}$. It is obvious that ${\cA}_i=\{s_j:0\le j\le n-1\}$
and ${\cB}_i=\{q_j:0\le j\le n-1\}$. Since $n$ is prime, the $n$ pairs of ${\cA}_i$ form an $n$-cycle (a cycle of length $n$),
so are those of ${\cB}_i$. Note that
$s_0=\{0,i\}\in {\cA}_i,q_{m}=\{n+i-1,2n-1\}\in {\cB}_i$ where $n-1\equiv m\cdot i$ (mod $n$) ($0\le m\le n-1$).
Since $n$ is odd, applying the $n$-cycle of ${\cA}_i$ we can partition
${\cA}_i\setminus\{s_0\}$ into two subsets ${\cA}_{i,1}$ and  ${\cA}_{i,2}$ with equal size ${n-1\over 2}$ and
$$
\bigcup_{P\in{\cA}_{i,1}}P=\{0,1,\ldots,n-1\}\setminus\{0\},\bigcup_{P\in{\cA}_{i,2}}P=\{0,1,\ldots,n-1\}\setminus\{i\}.
$$
Similarly, we can partition ${\cal B}_i\setminus\{q_m\}$ into two subsets
${\cB}_{i,1}$ and  ${\cal B}_{i,2}$ with equal size ${n-1\over 2}$ and
$$
\bigcup_{P\in{\cal B}_{i,1}}P=\{n,n+1,\ldots,2n-1\}\setminus\{n+i-1\},\bigcup_{P\in{\cB}_{i,2}}P=\{n,n+1,\ldots,2n-1\}\setminus\{2n-1\}.
$$
Obviously we have
$\{0,n+i-1\},\{i,2n-1\}\in{\cC}_{i-1}$. So define
\begin{align*}
&{\cal R}_{i,1}={\cal A}_{i,1}\cup{\cal B}_{i,1}\cup\{\{0,n+i-1\}\},\\
&{\cal R}_{i,2}={\cal A}_{i,2}\cup{\cal B}_{i,2}\cup\{\{i,2n-1\}\},\\
&{\cal R}_{i,3}=({\cal C}_{i-1}\cup\{s_0,q_m\})\setminus\{\{0,n+i-1\},\{i,2n-1\}\}.
\end{align*}
It is easy to verify that each ${\cal R}_{i,j}$ is a one-factor ($1\le i\le {2n-1\over 3}$ and $1 \leq j \leq 3$) and Eq. (\ref{redistr}) holds. This completes the proof.
\end{proof}
A related result will be given in the following lemma for a one-factorization $\cT$ with almost an identical proof.
\begin{lemma}
\label{lem:primesT}
Let $n\equiv 1$ or $5~(\text{mod}~6)$ be a prime. Then there exists a one-factorization $\cT$ of $K_{2n}$
on the vertex set $\{0,1,\ldots, 2n-1\}$ satisfying the followings:
\begin{enumerate}
\item if $n\equiv 5$ {\rm (mod 6)}, then ${\cal T}=\{{\cal T}_{i,j}:1\le i\le {2n-1\over 3}~ 1 \leq j \leq 3\},$ where
\begin{align}\label{redistr2}{\cal T}_{i,1}\cup {\cal T}_{i,2}\cup {\cal T}_{i,3}={\cal A}_i\cup {\cal B}_i\cup{\cal D}_{i-1},1\le i\le {n-1\over 2},\end{align}
  and $${\cal T}_{i,1}={\cal D}_{3i-n-2}, {\cal T}_{i,2}= {\cal D}_{3i-n-1}, {\cal T}_{i,3}={\cal D}_{3i-n},{n+1\over 2}\le i\le {2n-1\over 3}.$$

\item   if $n\equiv 1$ {\rm (mod 6)}, then ${\cal T}=\{{\cal T}_{i,j}:1\le i\le {n-1\over 2~ 1 \leq j \leq 3 }\}\cup\{{\cal D}_i:{n-1\over2}\le i\le n-1\},$ where Eq. {\rm (\ref{redistr2})} also holds.
\end{enumerate}
\end{lemma}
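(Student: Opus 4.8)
The plan is to run the proof of Lemma~\ref{lem:primesR} essentially verbatim, replacing the one-factor $\cC_{i-1}$ by $\cD_{i-1}$ throughout. Lemma~\ref{lem:one-factors} already supplies everything needed at the structural level: each $\cD_i$ is a one-factor of $K_{2n}$, and $\bigcup_{i=1}^{(n-1)/2}(\cA_i\cup\cB_i)\cup\bigcup_{i=0}^{n-1}\cD_i$ covers every pair of $\{0,\ldots,2n-1\}$. The ``top'' part of the factorization needs no work at all: when $n\equiv 5~(\text{mod}~6)$ the triples $\{3i-n-2,\,3i-n-1,\,3i-n\}$ for $\frac{n+1}{2}\le i\le\frac{2n-1}{3}$ tile the leftover subscripts $\frac{n-1}{2},\ldots,n-1$ exactly (the same subscript count as in Lemma~\ref{lem:primesR}, since that computation involves only the arithmetic of the indices and is indifferent to $\cC$ versus $\cD$), and when $n\equiv 1~(\text{mod}~6)$ the leftover factors $\cD_i$, $\frac{n-1}{2}\le i\le n-1$, are listed directly as one-factors. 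So the only genuine content is to realize Eq.~(\ref{redistr2}): for each $1\le i\le\frac{n-1}{2}$, produce one-factors $\cT_{i,1},\cT_{i,2},\cT_{i,3}$ whose union is $\cA_i\cup\cB_i\cup\cD_{i-1}$.

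For this I would reuse the cycle decomposition of Lemma~\ref{lem:primesR}: since $n$ is prime, the pairs of $\cA_i$ (resp.\ $\cB_i$) form an $n$-cycle, and deleting one edge leaves a Hamiltonian path which splits by alternation into two matchings of size $\frac{n-1}{2}$, each omitting exactly one endpoint of the deleted edge. As before I delete $s_0=\{0,i\}$ from $\cA_i$, so that $\cA_{i,1}$ omits $0$ and $\cA_{i,2}$ omits $i$. The one and only departure from the $\cC$-proof is the edge deleted from $\cB_i$. The edges of $\cD_{i-1}$ incident with $0$ and with $i$ are $\{0,n+i-1\}$ and $\{i,n+2i-1\}$ (immediate from $y-x\equiv i-1~(\text{mod}~n)$), so I must arrange that $\cB_{i,1}$ omits $n+i-1$ and $\cB_{i,2}$ omits $n+2i-1$. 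This forces me to delete from $\cB_i$ the pair $\{n+i-1,n+2i-1\}$, which is precisely the edge $q_{m+1}$ adjacent to the edge $q_m=\{n+i-1,2n-1\}$ used in Lemma~\ref{lem:primesR}; it is a genuine edge of the $\cB_i$-cycle because its endpoints differ by $i$, and $n+2i-1\in\{n+1,\ldots,2n-2\}$ since $1\le i\le\frac{n-1}{2}$.

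With these choices I would set $\cT_{i,1}=\cA_{i,1}\cup\cB_{i,1}\cup\{\{0,n+i-1\}\}$, $\cT_{i,2}=\cA_{i,2}\cup\cB_{i,2}\cup\{\{i,n+2i-1\}\}$, and $\cT_{i,3}=(\cD_{i-1}\cup\{\{0,i\},\{n+i-1,n+2i-1\}\})\setminus\{\{0,n+i-1\},\{i,n+2i-1\}\}$, then check three routine points. First, $\cT_{i,1}$ and $\cT_{i,2}$ are perfect matchings, since in each case the two omitted vertices are patched by exactly the added cross-edge. Second, $\cT_{i,3}$ is a perfect matching, since deleting the two cross-edges $\{0,n+i-1\},\{i,n+2i-1\}\in\cD_{i-1}$ frees exactly the four vertices $0,i,n+i-1,n+2i-1$, which are re-covered by the two added within-side edges $\{0,i\}$ and $\{n+i-1,n+2i-1\}$ (both disjoint from $\cD_{i-1}$, being internal to one side of the bipartition). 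Third, the three unions telescope to $\cA_i\cup\cB_i\cup\cD_{i-1}$, and a size count ($3n$ distinct edges on both sides) confirms the three factors are mutually disjoint; this is Eq.~(\ref{redistr2}).

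The main, and essentially the only, obstacle is to spot that a literal transcription of Lemma~\ref{lem:primesR} fails and to identify the correct fix. In the $\cC$-case the partner of $i$ was $2n-1$, but $\{i,2n-1\}\notin\cD_{i-1}$: its difference is $n-1-i$, which equals $i-1~(\text{mod}~n)$ only when $2i\equiv 0~(\text{mod}~n)$, impossible for $1\le i\le\frac{n-1}{2}$ with $n$ odd. Recognizing that the correct $\cD_{i-1}$-partner of $i$ is $n+2i-1$, and that this still lands on an edge of the cycle $\cB_i$ (namely $q_{m+1}$), is the whole point; everything else is word-for-word the argument of Lemma~\ref{lem:primesR}. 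I would also note in passing why a mere relabeling will not do: no reflection of one side of the bipartition converts $\cC$ into $\cD$ while fixing both $\cA_i$ and $\cB_i$, since reflecting the high coordinates sends $\cB_i\mapsto\cB_{n-i}$, so a direct modification of the deleted edge is unavoidable.
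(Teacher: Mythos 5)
Your proposal is correct and follows the paper's proof exactly: the paper likewise reduces to Lemma~\ref{lem:primesR} and pivots on the same key observation, namely that $\{0,i\}\in\cA_i$, $\{n+i-1,n+2i-1\}\in\cB_i$, and $\{0,n+i-1\},\{i,n+2i-1\}\in\cD_{i-1}$, so the only change is the edge deleted from the $\cB_i$-cycle. Your write-up simply makes explicit the verifications the paper leaves as ``exactly parallel.''
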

\begin{proof}
The proof is very similar to that of Lemma~\ref{lem:primesR}. The difference is that~$\cD_i$
is used instead of~${\cC}_i$. To prove that, for $1\le i\le {n-1\over 2}$, ${\cal A}_i\cup {\cB}_i\cup{\cD}_{i-1}$
can be partitioned into three one-factors ${\cT}_{i,1}, {\cT}_{i,2}$, and~${\cT}_{i,3}$, we note
that $\{0,i\}\in{\cA}_i,\{n+i-1,n+2i-1\}\in{\cB}_i$ and $\{0,n+i-1\},\{i,n+2i-1\}\in{\cD}_{i-1}$. The rest of
the proof is exactly parallel to that of Lemma~\ref{lem:primesR}.
\end{proof}

\begin{example}
For $n=11$, the set $\cR$ and part of the set $\cT$ are given by the following sets.
$$
\cR_{1,1} = \{ \{1,2\},\{3,4\},\{5,6\},\{7,8\},\{9,10\},\{12,13\},\{14,15\},\{16,17\},\{18,19\},\{20,21\},\{0,11\}  \}
$$
$$
\cR_{1,2} = \{ \{2,3\},\{4,5\},\{6,7\},\{8,9\},\{10,0\},\{11,12\},\{13,14\},\{15,16\},\{17,18\},\{19,20\},\{1,21\}  \}
$$
$$
\cR_{1,3} = \{ \{2,20\},\{3,19\},\{4,18\},\{5,17\},\{6,16\},\{7,15\},\{8,14\},\{9,13\},\{10,12\},\{0,1\},\{11,21\}  \}
$$
$$
\cR_{2,1} = \{ \{2,4\},\{3,5\},\{6,8\},\{7,9\},\{1,10\},\{11,13\},\{14,16\},\{15,17\},\{18,20\},\{19,21\},\{0,12\}  \}
$$
$$
\cR_{2,2} = \{ \{1,3\},\{4,6\},\{5,7\},\{8,10\},\{0,9\},\{12,14\},\{13,15\},\{16,18\},\{17,19\},\{11,20\},\{2,21\}  \}
$$
$$
\cR_{2,3} = \{ \{1,11\},\{3,20\},\{4,19\},\{5,18\},\{6,17\},\{7,16\},\{8,15\},\{9,14\},\{10,13\},\{0,2\},\{12,21\}  \}
$$
$$
\cR_{3,1} = \{ \{3,6\},\{4,7\},\{5,8\},\{1,9\},\{2,10\},\{11,14\},\{12,15\},\{16,19\},\{17,20\},\{18,21\},\{0,13\}  \}
$$
$$
\cR_{3,2} = \{ \{1,4\},\{2,5\},\{6,9\},\{7,10\},\{8,0\},\{13,16\},\{14,17\},\{15,18\},\{11,19\},\{12,20\},\{3,21\}  \}
$$
$$
\cR_{3,3} = \{ \{1,12\},\{2,11\},\{4,20\},\{5,19\},\{6,18\},\{7,17\},\{8,16\},\{9,15\},\{10,14\},\{0,3\},\{13,21\}  \}
$$
$$
\cR_{4,1} = \{ \{1,5\},\{3,7\},\{4,8\},\{6,10\},\{2,9\},\{12,16\},\{15,19\},\{17,21\},\{11,18\},\{13,20\},\{0,14\}  \}
$$
$$
\cR_{4,2} = \{ \{2,6\},\{5,9\},\{0,7\},\{1,8\},\{3,10\},\{11,15\},\{13,17\},\{14,18\},\{16,20\},\{12,19\},\{4,21\}  \}
$$
$$
\cR_{4,3} = \{ \{1,13\},\{2,12\},\{3,11\},\{5,20\},\{6,19\},\{7,18\},\{8,17\},\{9,16\},\{10,15\},\{0,4\},\{14,21\}  \}
$$
$$
\cR_{5,1} = \{ \{1,6\},\{2,7\},\{3,8\},\{4,9\},\{5,10\},\{16,21\},\{11,17\},\{12,18\},\{13,19\},\{14,20\},\{0,15\}  \}
$$
$$
\cR_{5,2} = \{ \{0,6\},\{1,7\},\{2,8\},\{3,9\},\{4,10\},\{11,16\},\{12,17\},\{13,18\},\{14,19\},\{15,20\},\{5,21\}  \}
$$
$$
\cR_{5,3} = \{ \{1,14\},\{2,13\},\{3,12\},\{4,11\},\{6,20\},\{7,19\},\{8,18\},\{9,17\},\{10,16\},\{0,5\},\{15,21\}  \}
$$
$$
\cR_{6,1} = \cC_5,~  \cR_{6,2} = \cC_6,~ \cR_{6,3} = \cC_7,
$$
$$
\cR_{7,1} = \cC_8,~  \cR_{7,2} = \cC_9,~ \cR_{7,3} = \cC_{10},
$$
\vspace{0.1cm}
$$
\cT_{1,1} = \{ \{1,2\},\{3,4\},\{5,6\},\{7,8\},\{9,10\},\{12,13\},\{14,15\},\{16,17\},\{18,19\},\{20,21\},\{0,11\}  \}
$$
$$
\cT_{1,2} = \{ \{2,3\},\{4,5\},\{6,7\},\{8,9\},\{0,10\},\{11,21\},\{13,14\},\{15,16\},\{17,18\},\{19,20\},\{1,12\}  \}
$$
$$
\cT_{1,3} = \{ \{2,13\},\{3,14\},\{4,15\},\{5,16\},\{6,17\},\{7,18\},\{8,19\},\{9,20\},\{10,21\},\{0,1\},\{11,12\}  \}
$$
$$
\cT_{2,1} = \{ \{2,4\},\{3,5\},\{6,8\},\{7,9\},\{1,10\},\{11,13\},\{14,16\},\{15,17\},\{18,20\},\{19,21\},\{0,12\}  \}
$$
$$
\cT_{2,2} = \{ \{1,3\},\{4,6\},\{5,7\},\{8,10\},\{0,9\},\{13,15\},\{16,18\},\{17,19\},\{11,20\},\{12,21\},\{2,14\}  \}
$$
$$
\cT_{2,3} = \{ \{1,13\},\{3,15\},\{4,16\},\{5,17\},\{6,18\},\{7,19\},\{8,20\},\{9,21\},\{10,11\},\{0,2\},\{12,14\}  \}
$$
%$$
%\cT_{3,1} = \{ \{3,6\},\{4,7\},\{5,8\},\{1,9\},\{2,10\},\{11,14\},\{12,15\},\{16,19\},\{17,20\},\{18,21\},\{0,13\}  \}
%$$
%$$
%\cT_{3,2} = \{ \{1,4\},\{2,5\},\{6,9\},\{7,10\},\{0,8\},\{14,17\},\{15,18\},\{11,19\},\{12,20\},\{13,21\},\{3,16\}  \}
%$$
%$$
%\cT_{3,3} = \{ \{1,14\},\{2,15\},\{4,17\},\{5,18\},\{6,19\},\{7,20\},\{8,21\},\{9,11\},\{10,12\},\{0,3\},\{13,16\}  \}
%$$
%$$
%\cT_{4,1} = \{ \{4,8\},\{1,5\},\{2,9\},\{6,10\},\{3,7\},\{18,11\},\{15,19\},\{12,16\},\{13,20\},\{17,21\},\{0,14\}  \}
%$$
%$$
%\cT_{4,2} = \{ \{2,6\},\{5,9\},\{0,7\},\{1,8\},\{3,10\},\{11,15\},\{13,17\},\{16,20\},\{12,19\},\{14,21\},\{4,18\}  \}
%$$
%$$
%\cT_{4,3} = \{ \{1,15\},\{2,16\},\{3,17\},\{5,19\},\{6,20\},\{7,21\},\{8,11\},\{9,12\},\{10,13\},\{0,4\},\{14,18\}  \}
%$$
%$$
%\cT_{5,1} = \{ \{1,6\},\{2,7\},\{3,8\},\{4,9\},\{5,10\},\{16,21\},\{11,17\},\{12,18\},\{13,19\},\{14,20\},\{0,15\}  \}
%$$
%$$
%\cT_{5,2} = \{ \{0,6\},\{1,7\},\{2,8\},\{3,9\},\{4,10\},\{11,16\},\{12,17\},\{13,18\},\{14,19\},\{15,21\},\{5,20\}  \}
%$$
%$$
%\cT_{5,3} = \{ \{1,16\},\{2,17\},\{3,18\},\{4,19\},\{6,21\},\{7,11\},\{8,12\},\{9,13\},\{10,14\},\{0,5\},\{15,20\}  \}
%$$
%$$
%\cT_{6,1} = \cD_5,~  \cT_{6,2} = \cD_6,~ \cT_{6,3} = \cD_7,
%$$
%$$
%\cT_{7,1} = \cD_8,~  \cT_{7,2} = \cD_9,~ \cT_{7,3} = \cD_{10},
%$$
\end{example}

\vspace{0.2cm}

It is not difficult to accommodate the proofs of Lemmas~\ref{lem:primesR} and~\ref{lem:primesT}
to all positive integers $n \equiv 1$ or $5~(\text{mod}~6)$, where $n \geq 7$.
\begin{lemma}
\label{lem:generalRT}
Lemmas~\ref{lem:primesR} and~\ref{lem:primesT} hold for any positive integer $n \equiv 1$ or $5~(\text{mod}~6)$, where $n \geq 7$.
\end{lemma}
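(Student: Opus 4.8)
The plan is to retrace the proofs of Lemmas~\ref{lem:primesR} and~\ref{lem:primesT} and isolate the single place where primality of $n$ was used. For a fixed $i$ with $1\le i\le\frac{n-1}{2}$, the entire content of those proofs is the splitting of $\cA_i\cup\cB_i\cup\cC_{i-1}$ (respectively $\cA_i\cup\cB_i\cup\cD_{i-1}$) into the three one-factors $\cR_{i,1},\cR_{i,2},\cR_{i,3}$; the clauses for $i\ge\frac{n+1}{2}$ only relabel the leftover $\cC_j$'s (resp. $\cD_j$'s) via index arithmetic and involve no primality, so they carry over once the covering identities of Lemma~\ref{lem:one-factors} are invoked. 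Since $\cC_{i-1}$ is a one-factor and $\cA_i,\cB_i$ are $2$-regular on the lower and upper halves of $\{0,1,\dots,2n-1\}$, the union $\cA_i\cup\cB_i\cup\cC_{i-1}$ is a cubic graph, and producing the three one-factors is exactly a proper $3$-edge-colouring of it; any such colouring automatically realises Eq.~(\ref{redistr}). Primality entered only through the claim that $\cA_i$ and $\cB_i$ are single $n$-cycles, so that removing one edge ($s_0$ and $q_m$) turns each into a path that can be $2$-coloured.

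For general $n\equiv 1$ or $5~(\text{mod}~6)$ I would set $d=\gcd(i,n)$. As $n$ is odd, $\cA_i$ decomposes into $d$ cycles, each of odd length $n/d$, running through the cosets of $\langle i\rangle=\langle d\rangle$ in $\Z_n$, and likewise for $\cB_i$ on the upper half. The generalization of the prime argument is to delete one edge from \emph{each} of these $d$ cycles instead of a single edge overall. Deleting $\{r,r+i\}$ from the cycle of $\cA_i$ through the coset of $r$, for $r=0,\dots,d-1$, turns every cycle into a path on an odd number of vertices; a single alternating $2$-colouring of all $d$ paths then gives matchings $\cA_{i,1},\cA_{i,2}$ missing exactly the vertex sets $\{0,\dots,d-1\}$ and $\{i,\dots,i+d-1\}$, and the same is carried out for $\cB_i$.

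The step I expect to be the crux is coordinating the edges deleted from $\cB_i$ with the patching edges drawn from $\cC_{i-1}$ so that $\cR_{i,3}$ emerges as a one-factor. Here the plan is to compute the $\cC_{i-1}$-partner of each vertex missed on the $\cA$-side: the partner of $r$ is $n+((i-1-r)\bmod n)$ and the partner of $i+r$ is $2n-1-r$. One then checks that within each cycle of $\cB_i$ these two target vertices are joined by an edge of $\cB_i$ (their upper-coordinate difference is $\equiv -i$), so deleting precisely those $d$ edges and $2$-colouring makes $\cB_{i,1},\cB_{i,2}$ miss exactly the required partners. Taking $\cR_{i,1}=\cA_{i,1}\cup\cB_{i,1}\cup\{\{r,\,n+((i-1-r)\bmod n)\}:0\le r<d\}$ and $\cR_{i,2}=\cA_{i,2}\cup\cB_{i,2}\cup\{\{i+r,\,2n-1-r\}:0\le r<d\}$ consumes $2d$ edges of $\cC_{i-1}$, and $\cR_{i,3}$ is the remaining $n-2d$ edges of $\cC_{i-1}$ together with the $2d$ deleted edges of $\cA_i$ and $\cB_i$; a direct count shows these deleted edges cover exactly the $4d$ vertices left uncovered by the removed $\cC_{i-1}$ edges, so $\cR_{i,3}$ is a perfect matching. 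The $\cD_{i-1}$ version is identical up to recomputing the partner map, exactly as in the passage from Lemma~\ref{lem:primesR} to Lemma~\ref{lem:primesT}. The real obstacle is thus only the disjointness bookkeeping, for which $d\le i\le\frac{n-1}{2}$ guarantees $2d<n$ and the pairing of index sets; there is no new structural idea beyond the multi-cycle deletion.
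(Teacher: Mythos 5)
Your proposal is correct and follows essentially the same route as the paper: the paper likewise generalises the prime case by deleting one edge from each of the $\gcd(i,n)$ odd cycles of $\cA_i$ and $\cB_i$, two\hbox{-}colouring the resulting paths, and patching with the two matched edges of $\cC_{i-1}$ (resp.\ $\cD_{i-1}$), placing the deleted cycle edges together with the leftover $\cC_{i-1}$ (resp.\ $\cD_{i-1}$) edges into the third one-factor. The only cosmetic differences are that the paper writes out the $\cT/\cD_{i-1}$ version while you write out the $\cR/\cC_{i-1}$ version, and your framing via a $3$-edge-colouring of the cubic graph $\cA_i\cup\cB_i\cup\cC_{i-1}$ is an observation the paper does not make explicit.
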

\begin{proof}
When $n \equiv 1$ or $5~(\text{mod}~6)$,
the proofs for Lemmas~\ref{lem:primesR} and~\ref{lem:primesT} are similar, so we only show the existence of
a one-factorization ${\cT}$ for Lemma~\ref{lem:primesT}. The proof is actually similar to the case of $n$ being prime.
The difference  is that  ${\cA}_i$ and ${\cB}_i$
may consist of  more than one cycle if $n$ is not a prime.
Nevertheless, we can use the same procedure. More precisely, we are done as follows.

Assume that we want to partition ${\cA}_i\cup {\cB}_i\cup{\cD}_{i-1},1\le i\le {n-1\over 2}$, into three one-factors.
The length of the cycles  generated from $\cA_i$ is $\ell= {n\over \gcd(i,n)}$ and the number
of cycles is $\rho=\gcd(i,n)$. We denote the $j$-th cycle, $0\le j\le \rho-1$,
by ${\cal A}_{i}^{(j)}$, which consists of the following $\ell$ edges (elements are reduced to the range $\{0,1,\ldots,n-1\}$):
$$\{j, i + j\}, \{i + j, 2i + j\}, \{2i + j, 3i + j\}, \ldots, \{n- i + j, j\}.$$
For $\cB_i$ we have the same $\rho$ cycles of length $\ell$, say ${\cal B}_{i}^{(j)},0\le j\le \rho-1$,
where the elements are reduced to the range $\{n,n+1,\ldots,2n-1\}$. Note that $\ell$ is odd
and $\{j,i+j\}\in{\cal A}_i^{(j)},\{\beta_{i,j},b_{i,j}\}\in \cB_i^{(j')}$,
where $\beta_{i,j} \equiv i+j-1$ (mod $n$), $b_{i,j}\equiv 2i+j-1$ (mod $n$) with $n\le \beta_{i,j},b_{i,j}\le 2n-1$,
and $j'\equiv j-1$ (mod $\rho$) with $0\le j'\le \rho-1$. We can partition ${\cal A}_i^{(j)}\setminus\{\{j,i+j\}\}$
into two subsets  ${\cal A}_{i,1}^{(j)}$, and ${\cal A}_{i,2}^{(j)}$ with equal
size ${\ell-1\over 2}$ and
$$
\bigcup_{P\in \cA_{i,1}^{(j)}}P=(\bigcup_{P\in \cA_{i}^{(j)}}P)\setminus\{j\},\bigcup_{P\in \cA_{i,2}^{(j)}}P=(\bigcup_{P\in \cA_{i}^{(j)}}P)\setminus\{i+j\}.
$$
Similarly, we can partition  $\cB_i^{(j')}\setminus\{\{\beta_{i,j},b_{i,j}\}\}$ into two subsets
$\cB_{i,1}^{(j')}$ and $\cB_{i,2}^{(j')}$ with equal size ${\ell-1\over 2}$ and
$$
\bigcup_{P\in \cB_{i,1}^{(j')}}P=(\bigcup_{P\in \cB_{i}^{(j')}}P)\setminus\{\beta_{i,j}\},\bigcup_{P\in \cB_{i,2}^{(j')}}P=(\bigcup_{P\in \cB_{i}^{(j')}}P)\setminus\{b_{i,j}\}.
$$
Obviously we  have $\{j,\beta_{i,j}\},\{i+j,b_{i,j}\}\in{\cal D}_{i-1}$. So define
\begin{align*}
&\cT_{i,1}=\bigcup_{j=0}^{\rho-1}(\cA_{i,1}^{(j)}\cup \cB_{i,1}^{(j')}\cup\{\{j,\beta_{i,j}\}\}),\\
&\cT_{i,2}=\bigcup_{j=0}^{\rho-1}(\cA_{i,2}^{(j)}\cup{\cal B}_{i,2}^{(j')}\cup\{\{i+j,b_{i,j}\}\}),\\
&\cT_{i,3}=({\cal D}_{i-1}\cup\{\{j,i+j\},\{\beta_{i,j},b_{i,j}\}:0\le j\le \rho-1\})\\
&\qquad\qquad \ \ \ \ \setminus\{\{j,\beta_{i,j}\},\{i+j,b_{i,j}\}:0\le j\le \rho-1\}.
\end{align*}
It is easy to verify that each $\cT_{i,k}$ is a one-factor ($1\le i\le {n-1\over 2}$ and $k=1,2,3$) and Eq. (\ref{redistr2}) holds.
\end{proof}

\subsubsection{A Latin Square for Direct Products}
\label{sec:LatinS}

This subsection is devoted only to the case when $n \equiv 5~(\text{mod}~6)$.
Let $\cR_i \triangleq \{ \cR_{i,1},\cR_{i,2},\cR_{i,3} \}$ and
$\cT_i \triangleq \{ \cT_{i,1},\cT_{i,2},\cT_{i,3} \}$, $1 \leq i \leq \frac{2n-1}{3}$.
The same definition for $1 \leq i \leq \frac{n-1}{2}$ is given for $n \equiv 1~(\text{mod}~6)$.
We construct a Latin square which makes a distinction between
the the quadruples which are contained in
the DLS Construction and those which will be constructed with the DB Construction.

Let $\nu = \frac{2n-1}{3} $ and let $\cM$ be
a $\nu \times \nu$ Latin square defined as follows.
The elements of $\cM$ are taken from the set $\{\cT_1,\cT_2,\ldots,\cT_\nu \}$.
The $i$-th column of $\cM$ is indexed by $\cR_i$, $1 \leq i \leq \nu$.
The first two rows of $\cM$ represent direct products of $\cR_i$'s by $\cT_j$'s
for which some of the related quadruples are contained in the first $2n$ PDQs of order $4n$,
and all the quadruples of the first $2n$ PDQs are contained in these direct products, which will be explained in the next subsection.
For this purpose let $\cM(1,\cR_i) =\cT_i$ for $1 \leq i \leq \nu$,
$\cM(2,\cR_i) = \cT_{i-1}$ for $2 \leq i \leq \nu$, and $\cM(2,\cR_1)=\cT_\nu$.
The last $\nu -2$ rows of $\cM$ can be chosen arbitrarily to form a Latin square.
One simple choice is to take these rows as cyclic shifts of the first row.

\begin{example}
\label{ex:LS_DB}
We continue the example for $n=11$.
The first two rows of $\cM$, represent the quadruples from configuration $(2,2)$
which are contained in the first 22 PDQs of order 44 obtained by the DLS Construction.
These rows are given by
$$
\begin{array}{lccccccc}
& \cR_1 & \cR_2 & \cR_3 & \cR_4 & \cR_5 & \cR_6 & \cR_7 \\ \hline
1 & \cT_1 & \cT_2 & \cT_3 & \cT_4 & \cT_5 & \cT_6 & \cT_7 \\
2 & \cT_7 & \cT_1 & \cT_2 & \cT_3 & \cT_4 & \cT_5 & \cT_6
\end{array}
$$
These two rows can be completed to a $7 \times 7$ Latin square as follows
$$
\begin{array}{lccccccc}
& \cR_1 & \cR_2 & \cR_3 & \cR_4 & \cR_5 & \cR_6 & \cR_7 \\ \hline
1 & \cT_1 & \cT_2 & \cT_3 & \cT_4 & \cT_5 & \cT_6 & \cT_7 \\
2 & \cT_7 & \cT_1 & \cT_2 & \cT_3 & \cT_4 & \cT_5 & \cT_6 \\ \hline
3 & \cT_6 & \cT_7 & \cT_1 & \cT_2 & \cT_3 & \cT_4 & \cT_5 \\
4 & \cT_5 & \cT_6 & \cT_7 & \cT_1 & \cT_2 & \cT_3 & \cT_4 \\
5 & \cT_4 & \cT_5 & \cT_6 & \cT_7 & \cT_1 & \cT_2 & \cT_3 \\
6 & \cT_3 & \cT_4 & \cT_5 & \cT_6 & \cT_7 & \cT_1 & \cT_2 \\
7 & \cT_2 & \cT_3 & \cT_4 & \cT_5 & \cT_6 & \cT_7 & \cT_1
\end{array}.
$$
%The last five rows represent quadruples of configuration $(2,2)$ which are
%not obtained in the first 22 disjoint SQS(44)s and will be explained in the sequel.

%For our running example with $n-7$, the related $4 \times 4$ Latin square $\cM(R,T)$ is
%$$
%\begin{array}{lcccc}
%& \cR_1 & \cR_2 & \cR_3 & \cR_4 \\ \hline
%1 & \cT_1 & \cT_2 & \cT_3 & \cT_4 \\
%2 & \cT_4 & \cT_1 & \cT_2 & \cT_3 \\ \hline
%3 & \cT_3 & \cT_4 & \cT_1 & \cT_2 \\
%4 & \cT_2 & \cT_3 & \cT_4 & \cT_1
%\end{array}.
%$$
\end{example}

\subsubsection{Proof of Theorem~\ref{thm:main}}

We distinguish again between the case
$n \equiv 5~(\text{mod}~6)$ and the case $n \equiv 1~(\text{mod}~6)$.

\noindent
{\bf Case 1:} $n \equiv 5~(\text{mod}~6)$.

We continue now to conclude the second part of the construction and obtain more
PDQs of order $4n$. We construct a $(2n-1) \times (2n-1)$ Latin square $\cH_{2n-1}$ from the Latin square $\cM$
as follows. Each row $i$ of $\cM$ is replaced by rows $(i,1)$, $(i,2)$, and $(i,3)$, in $\cH_{2n-1}$.
Each column $\cR_j$ of $\cM$ is replaced by columns $\cR_{j,1}$,
$\cR_{j,2}$, and $\cR_{j,3}$, in $\cH_{2n-1}$.
Any entry of $\cM(i,\cR_j)$ which is equal to $\cT_\ell$ is replaced by the $3 \times 3$ Latin square in the related
three rows and three columns of $\cH_{2n-1}$:

$$
\begin{array}{lcccc}
& \vline & \cR_{j,1} & \cR_{j,2} & \cR_{j,3}  \\ \hline
(i,1) & \vline & \cT_{\ell,1} & \cT_{\ell,2} & \cT_{\ell,3}  \\
(i,2) & \vline & \cT_{\ell,2} & \cT_{\ell,3} & \cT_{\ell,1}  \\
(i,3) & \vline & \cT_{\ell,3} & \cT_{\ell,1} & \cT_{\ell,2}
\end{array}.
$$

%This entry corresponds to nine direct products
%whose quadruples can be partitioned into three disjoint sets as follows.
%$$
%\cL_{i,j,1} = \{ \{(v,0),(x,0),(y,1),(z,1)\} ~:~ \{v,x\} \in \cR_{j,r},~ \{y,z\} \in \cT_{\ell,r},~ 1 \leq r \leq 3  \},
%$$
%$$
%\cL_{i,j,2} = \{ \{(v,0),(x,0),(y,1),(z,1)\} ~:~ \{v,x\} \in \cR_{j,r},~ \{y,z\} \in \cT_{\ell,r+1},~ 1 \leq r \leq 3  \},
%$$
%$$
%\cL_{i,j,3} = \{ \{(v,0),(x,0),(y,1),(z,1)\} ~:~ \{v,x\} \in \cR_{j,r},~ \{y,z\} \in \cT_{\ell,r+2},~ 1 \leq r \leq 3  \},
%$$
%where $r+1$ and $r+2$ are taken modulo 3 and reduced to the range $\{1,2,3\}$.
%
%For any given $i$, $3 \leq i \leq \frac{2n-1}{3}$ and $1 \leq r \leq 3$, let
%$$
%\cL_{i,r} \triangleq \bigcup_{j=1}^{(2n-1)/3} \cL_{i,j,1}~.
%$$

\noindent
Then each row of ${\cal H}_{2n-1}$ forms the one-factorization ${\cal T}$.
By Lemma~\ref{lem:pairsDLS}, all the quadruples from configuration $(2,2)$ in the first $2n$ PDQs from the DLS Construction are
contained in the direct products related to the first six rows of $\cH_{2n-1}$.
That is, the direct products of ${\cal R}_{j,k}$ and $\cH_{2n-1}((i,k'),\cR_{j,k})$
for all $1\leq j\le {2n-1\over 3}$, $i=1,2$, and $1\le k,k'\le 3$.
All the direct products related to the last $2n-7$ rows of $\cH_{2n-1}$
are pairwise disjoint and hence by using them in the DB Construction, combined with $k$
PDQs of order $2n$ in the DB Construction, where $k = \min \{ D(2n),2n-7\}$
we obtain with the first $2n$ PDQs,
a total of $2n +\min \{ D(2n),2n-7\}$ PDQs of order $4n$. Thus, we have proved
Theorem~\ref{thm:main} for $n \equiv 5~(\text{mod}~6)$.

\begin{example}
\label{ex:DB11}
Continue with the example for $n=11$. We first form the first six rows of ${\cal H}_{21}$
from the Latin square ${\cal M}$ in Example~\ref{ex:LS_DB}. If column $\cR_{i_1,j_1}$, where $1 \leq i_1 \leq 6$,
has entry $\cT_{i_2,j_2}$ then the DLS
construction has used some quadruples $\{ (v,0),(x,1),(y,2),(z,3) \}$, where $\{ v,x \} \in \cR_{i_1,j_1}$
and $\{ y,z \} \in \cT_{i_2,j_2}$.
Clearly, there are also many of the quadruples related to these six rows were not obtained in the first part of the construction.
Finally, note that instead of some $\cR_{i,j}$'s and $\cT_{i,j}$'s we use $\cC_{\ell}$'s and $\cD_{\ell}$'s.

{\fontsize{4.5}{12} \selectfont
$$
\begin{array}{ccccccccccccccccccccc}
\cR_{1,1} & \cR_{1,2} & \cR_{1,3} & \cR_{2,1} & \cR_{2,2} & \cR_{2,3} & \cR_{3,1} & \cR_{3,2} & \cR_{3,3} &
\cR_{4,1} & \cR_{4,2} & \cR_{4,3} & \cR_{5,1} & \cR_{5,2} & \cR_{5,3} & \cC_5 & \cC_6 & \cC_7 & \cC_8 & \cC_9 & \cC_{10}\\ \hline
\cT_{1,1} & \cT_{1,2} & \cT_{1,3} & \cT_{2,1} & \cT_{2,2} & \cT_{2,3} & \cT_{3,1} & \cT_{3,2} & \cT_{3,3} &
\cT_{4,1} & \cT_{4,2} & \cT_{4,3} & \cT_{5,1} & \cT_{5,2} & \cT_{5,3} & \cD_5 & \cD_6 & \cD_7 & \cD_8 & \cD_9 & \cD_{10} \\
\cT_{1,2} & \cT_{1,3} & \cT_{1,1} & \cT_{2,2} & \cT_{2,3} & \cT_{2,1} & \cT_{3,2} & \cT_{3,3} & \cT_{3,1} &
\cT_{4,2} & \cT_{4,3} & \cT_{4,1} & \cT_{5,2} & \cT_{5,3} & \cT_{5,1} & \cD_6 & \cD_7 & \cD_5 & \cD_9 & \cD_{10} & \cD_8 \\
\cT_{1,3} & \cT_{1,1} & \cT_{1,2} & \cT_{2,3} & \cT_{2,1} & \cT_{2,2} & \cT_{3,3} & \cT_{3,1} & \cT_{3,2} &
\cT_{4,3} & \cT_{4,1} & \cT_{4,2} & \cT_{5,3} & \cT_{5,1} & \cT_{5,2} & \cD_7 & \cD_5 & \cD_6 & \cD_{10} & \cD_8 & \cD_9 \\
\cD_8 & \cD_9 & \cD_{10} & \cT_{1,1} & \cT_{1,2} & \cT_{1,3} & \cT_{2,1} & \cT_{2,2} & \cT_{2,3} &
\cT_{3,1} & \cT_{3,2} & \cT_{3,3} & \cT_{4,1} & \cT_{4,2} & \cT_{4,3} & \cT_{5,1} & \cT_{5,2} & \cT_{5,3} & \cD_5 & \cD_6 & \cD_7 \\
\cD_9 & \cD_{10} & \cD_8 & \cT_{1,2} & \cT_{1,3} & \cT_{1,1} & \cT_{2,2} & \cT_{2,3} & \cT_{2,1} &
\cT_{3,2} & \cT_{3,3} & \cT_{3,1} & \cT_{4,2} & \cT_{4,3} & \cT_{4,1} & \cT_{5,2} & \cT_{5,3} & \cT_{5,1} & \cD_6 & \cD_7 & \cD_5 \\
\cD_{10} & \cD_8 & \cD_9 & \cT_{1,3} & \cT_{1,1} & \cT_{1,2} & \cT_{2,3} & \cT_{2,1} & \cT_{2,2} &
\cT_{3,3} & \cT_{3,1} & \cT_{3,2} & \cT_{4,3} & \cT_{4,1} & \cT_{4,2} & \cT_{5,3} & \cT_{5,1} & \cT_{5,2} & \cD_7 & \cD_5 & \cD_6 \\
\end{array}
$$}

A completed $21 \times 21$ Latin square $\cH_{21}$ whose last 15 rows form the $15 \times 21$ Latin rectangle
for the DB Construction is given next. In the first 6 rows we remove those entries for which
no quadruple of the related direct product is contained in the PDQs obtained
by the DLS Construction. Note, that each column contains different elements and each one of the
last 15 rows contains each one-factor of $\cT$ exactly once.

$$
\begin{array}{cccccccccccccc}
& \cR_{1,1} & \cR_{1,2} & \cR_{1,3} & \cdots & \cR_{5,1} & \cR_{5,2} & \cR_{5,3} & \cC_5 & \cC_6 & \cC_7 & \cC_8 & \cC_9 & \cC_{10}\\ \hline
1 & \cT_{1,1} & \cT_{1,2} & \cT_{1,3} & \cdots & \cT_{5,1} & \cT_{5,2} & \cT_{5,3} & \cD_5 & \cD_6 & \cD_7 & \cD_8 & \cD_9 & \cD_{10} \\
2 & \cT_{1,2} & \cT_{1,3} & \cT_{1,1} & \cdots & \cT_{5,2} & \cT_{5,3} & \cT_{5,1} &  &  &  &  &  &  \\
3 & \cT_{1,3} & \cT_{1,1} & \cT_{1,2} & \cdots & \cT_{5,3} & \cT_{5,1} & \cT_{5,2} &  & \cD_5 & \cD_6 & & \cD_8 & \cD_9 \\
4 & &  & \cD_{10} & \cdots & \cT_{4,1} & \cT_{4,2} & \cT_{4,3} & \cT_{5,1} &  &  &  &  &  \\
5 & & \cD_{10} &  & \cdots & \cT_{4,2} & \cT_{4,3} & \cT_{4,1} & \cT_{5,2} &  &  & &  &  \\
6 & \cD_{10} & & & \cdots & \cT_{4,3} & \cT_{4,1} & \cT_{4,2} & \cT_{5,3} &  &  & \cD_7 &  &  \\ \hline
7 & \cD_5 & \cD_6 & \cD_7 & \cdots & \cT_{3,1} & \cT_{3,2} & \cT_{3,3} & \cT_{4,1} & \cT_{4,2} & \cT_{4,3} & \cT_{5,1} & \cT_{5,2} & \cT_{5,3} \\
8 & \cD_6 & \cD_7 & \cD_5 & \cdots & \cT_{3,2} & \cT_{3,3} & \cT_{3,1} & \cT_{4,2} & \cT_{4,3} & \cT_{4,1} & \cT_{5,2} & \cT_{5,3} & \cT_{5,1} \\
9 & \cD_7 & \cD_5 & \cD_6 & \cdots & \cT_{3,3} & \cT_{3,1} & \cT_{3,2} & \cT_{4,3} & \cT_{4,1} & \cT_{4,2} & \cT_{5,3} & \cT_{5,1} & \cT_{5,2} \\
10 & \cT_{5,1} & \cT_{5,2} & \cT_{5,3} & \cdots & \cT_{2,1} & \cT_{2,2} & \cT_{2,3} & \cT_{3,1} & \cT_{3,2} & \cT_{3,3} & \cT_{4,1} & \cT_{4,2} & \cT_{4,3} \\
11 & \cT_{5,2} & \cT_{5,3} & \cT_{5,1} & \cdots & \cT_{2,2} & \cT_{2,3} & \cT_{2,1} & \cT_{3,2} & \cT_{3,3} & \cT_{3,1} & \cT_{4,2} & \cT_{4,3} & \cT_{4,1} \\
12 & \cT_{5,3} & \cT_{5,1} & \cT_{5,2} & \cdots & \cT_{2,3} & \cT_{2,1} & \cT_{2,2} & \cT_{3,3} & \cT_{3,1} & \cT_{3,2} & \cT_{4,3} & \cT_{4,1} & \cT_{4,2} \\
13 & \cT_{4,1} & \cT_{4,2} & \cT_{4,3} & \cdots & \cT_{1,1} & \cT_{1,2} & \cT_{1,3} & \cT_{2,1} & \cT_{2,2} & \cT_{2,3} & \cT_{3,1} & \cT_{3,2} & \cT_{3,3} \\
14 & \cT_{4,2} & \cT_{4,3} & \cT_{4,1} & \cdots & \cT_{1,2} & \cT_{1,3} & \cT_{1,1} & \cT_{2,2} & \cT_{2,3} & \cT_{2,1} & \cT_{3,2} & \cT_{3,3} & \cT_{3,1} \\
15 & \cT_{4,3} & \cT_{4,1} & \cT_{4,2} & \cdots & \cT_{1,3} & \cT_{1,1} & \cT_{1,2} & \cT_{2,3} & \cT_{2,1} & \cT_{2,2} & \cT_{3,3} & \cT_{3,1} & \cT_{3,2} \\
16 & \cT_{3,1} & \cT_{3,2} & \cT_{3,3} & \cdots & \cD_8 & \cD_9 & \cD_{10} & \cT_{1,1} & \cT_{1,2} & \cT_{1,3} & \cT_{2,1} & \cT_{2,2} & \cT_{2,3} \\
17 & \cT_{3,2} & \cT_{3,3} & \cT_{3,1} & \cdots & \cD_9 & \cD_{10} & \cD_8 & \cT_{1,2} & \cT_{1,3} & \cT_{1,1} & \cT_{2,2} & \cT_{2,3} & \cT_{2,1} \\
18 & \cT_{3,3} & \cT_{3,1} & \cT_{3,2} & \cdots & \cD_{10} & \cD_8 & \cD_9 & \cT_{1,3} & \cT_{1,1} & \cT_{1,2} & \cT_{2,3} & \cT_{2,1} & \cT_{2,2}  \\
19 & \cT_{2,1} & \cT_{2,2} & \cT_{2,3} & \cdots & \cD_5 & \cD_6 & \cD_7 & \cD_8 & \cD_9 & \cD_{10} & \cT_{1,1} & \cT_{1,2} & \cT_{1,3} \\
20 & \cT_{2,2} & \cT_{2,3} & \cT_{2,1} & \cdots & \cD_6 & \cD_7 & \cD_5 & \cD_9 & \cD_{10} & \cD_8 & \cT_{1,2} & \cT_{1,3} & \cT_{1,1}  \\
21 & \cT_{2,3} & \cT_{2,1} & \cT_{2,2} & \cdots & \cD_7 & \cD_5 & \cD_6 & \cD_{10} & \cD_8 & \cD_9 & \cT_{1,3} & \cT_{1,1} & \cT_{1,2}
\end{array}
$$

\end{example}

\noindent
{\bf Case 2:} $n \equiv 1~(\text{mod}~6)$.

In this case the direct products are not obtained in a straightforward way like
in Case 1 for $n \equiv 5~(\text{mod}~6)$. But, the general method is very similar.
We start with an example for $n=7$.

\begin{example}
\label{lem:DB7}
Continue with the example for $n=7$.
The last seven rows of the following array  form one of the possible direct products to
produce seven sets of quadruples from configuration $(2,2)$ for the DB Construction.
%Note, that we use $\cD_1$, $\cD_2$, $\cD_3$, and $\cD_4$, instead
%$\cT_{4,1}$, $\cT_{4,2}$, $\cT_{4,3}$, and $\cT_{4,4}$,and in some cases in addition to.
Taking the direct products of the column labels and the corresponding entries in the first six rows
contains all the quadruples obtained from the DLS Construction (and also some other quadruples).
%Empty entry implies no direct product to add.
$$
\begin{array}{ccccccccccccc}
\cR_{1,1} & \cR_{1,2} & \cR_{1,3} & \cR_{2,1} & \cR_{2,2} & \cR_{2,3} & \cR_{3,1} & \cR_{3,2} & \cR_{3,3} & \cC_3 & \cC_4 & \cC_5 & \cC_6 \\ \hline
\cT_{1,1} & \cT_{1,2} & \cT_{1,3} & \cT_{2,1} & \cT_{2,2} & \cT_{2,3} & \cT_{3,1} & \cT_{3,2} & \cT_{3,3} & \cD_3 & \cD_4 & \cD_5 & \cD_6 \\
\cT_{1,2} & \cT_{1,3} & \cT_{1,1} & \cT_{2,2} & \cT_{2,3} & \cT_{2,1} & \cT_{3,2} & \cT_{3,3} & \cT_{3,1} &  & \cD_3 & \cD_4 & \cD_5 \\
\cT_{1,3} & \cT_{1,1} & \cT_{1,2} & \cT_{2,3} & \cT_{2,1} & \cT_{2,2} & \cT_{3,3} & \cT_{3,1} & \cT_{3,2} &  &  & &  \\
\cD_6 &  &  & \cT_{1,1} & \cT_{1,2} & \cT_{1,3} & \cT_{2,1} & \cT_{2,2} & \cT_{2,3} & \cT_{3,1} &  & &  \\
 & \cD_6 &  & \cT_{1,2} & \cT_{1,3} & \cT_{1,1} & \cT_{2,2} & \cT_{2,3} & \cT_{2,1} & \cT_{3,2} &  & &  \\
 & & \cD_6 & \cT_{1,3} & \cT_{1,1} & \cT_{1,2} & \cT_{2,3} & \cT_{2,1} & \cT_{2,2} & \cT_{3,3} &  & &  \\ \hline
\cT_{3,1} & \cT_{3,2} & \cT_{3,3} & \cD_3 & \cD_4 & \cD_5 & \cD_6 & \cT_{1,2} & \cT_{1,3} & \cT_{2,3} & \cT_{2,2} & \cT_{2,1} & \cT_{1,1} \\
\cT_{3,2} & \cT_{3,3} & \cT_{3,1} & \cD_4 & \cD_5 & \cD_3 & \cT_{1,2} & \cD_6 & \cT_{1,1} & \cT_{2,2} & \cT_{1,3} & \cT_{2,3} & \cT_{2,1} \\
\cT_{3,3} & \cT_{3,1} & \cT_{3,2} & \cD_5 & \cD_3 & \cD_4 & \cT_{1,3} & \cT_{1,1} & \cD_6 & \cT_{2,1} & \cT_{2,3} & \cT_{1,2} & \cT_{2,2} \\
\cT_{2,1} & \cT_{2,2} & \cT_{2,3} & \cD_6 & \cT_{3,2} & \cT_{3,3} & \cD_3 & \cD_4 & \cD_5 & \cT_{1,1} & \cT_{1,2} & \cT_{1,3}& \cT_{3,1} \\
\cT_{2,2} & \cT_{2,3} & \cT_{2,1} & \cT_{3,2} & \cD_6 & \cT_{3,1} & \cD_4 & \cD_5 & \cD_3 & \cT_{1,2} & \cT_{1,1}& \cT_{3,3} & \cT_{1,3} \\
\cT_{2,3} & \cT_{2,1} & \cT_{2,2} & \cT_{3,3} & \cT_{3,1} & \cD_6 & \cD_5 & \cD_3 & \cD_4 & \cT_{1,3} & \cT_{3,2}& \cT_{1,1}& \cT_{1,2} \\
\cD_3 & \cD_4 & \cD_5 & \cT_{3,1} & \cT_{3,3} & \cT_{3,2} & \cT_{1,1} & \cT_{1,3} & \cT_{1,2} & \cD_6 & \cT_{2,1}& \cT_{2,2}& \cT_{2,3}
\end{array}
$$
\end{example}

%Recall, that whenever the array $\cM(R,T)$ we have the entry $\cM(i,\cR_j) = \cT_\ell$, where $\cR_j = \{ \cR_{j,1},\cR_{j,2},\cR_{j,3} \}$
%and $\cT_\ell = \{ \cT_{\ell,1},\cT_{\ell,2},\cT_{\ell,3} \}$, the direct products for the DB Construction
%are taken exactly as in Case~1 for $n \equiv 5~(\text{mod}~6)$.

If $n > 7$ consider a $(2n-1) \times (2n-1)$ array $\cH_{2n-1}$ ($\cH$ in short) whose first $\frac{3n-3}{2}$ columns are indexed by
$\cR_{i,j}$, $1 \leq i \leq \frac{n-1}{2}$, $1 \leq j \leq 3$,
and whose last $\frac{n+1}{2}$ columns are indexed by $\cC_i$, $\frac{n-1}{2} \leq i \leq n-1$.
The entries of the array are the elements $\cT_{i,j}$, $1 \leq i \leq \frac{n-1}{2}$, $1 \leq j \leq 3$,
and $\cD_i$, $\frac{n-1}{2} \leq i \leq n-1$. The first $\frac{3n-3}{2}$ columns of the first six rows of $\cH$ are defined
by abused of definition similar for $\cH_{2n-1}$,
in the case of $n\equiv 5$ (mod 6). That is, we choose the first two rows and the first $\frac{n-1}{2}$ columns
of the Latin square $\cM$ and substitute $\cR_i$ by $\cR_{i,1},\cR_{i,2},\cR_{i,3}$,
and replace every entry $\cT_\ell$ with a Latin square with elements $\cT_{\ell,1},\cT_{\ell,2},\cT_{l,3}$,
but note that the first three columns of the resultant rows
%where  $n \equiv 5~(\text{mod}~6)$.
%Note, that the first three columns of rows
4, 5, and 6, form a $3 \times 3$ Latin square with the
elements $\cD_{n-3}$, $\cD_{n-2}$, $\cD_{n-1}$.
The other entries of the first six rows of $\cH$ are defined as follows.
\begin{itemize}
\item $\cH(1,\cC_i)= \cD_i$, $\frac{n-1}{2} \leq i \leq n-1$.

\item $\cH(2,\cC_i)= \cD_{i-1}$, $\frac{n+1}{2} \leq i \leq n-1$, and $\cH(2,\cC_{(n-1)/2}))=\cD_{n-1}$.

\item $\cH(3,\cC_i)= \cD_{i-2}$, $\frac{n+3}{2} \leq i \leq n-1$, $\cH(3,\cC_{(n-1)/2}))=\cD_{n-2}$, and $\cH(3,\cC_{(n+1)/2}))=\cD_{n-1}$.

\item Columns $\cC_{(n-1)/2}$, $\cC_{(n+1)/2}$, $\cC_{(n+3)/2}$, in rows 4, 5, and 6, form a $3 \times 3$ Latin square with the
elements $\cT_{(n-1)/2,j}$, $1 \leq j \leq 3$.

\item $\cH(4,\cC_i)= \cD_{i-3}$, $\frac{n+5}{2} \leq i \leq n-1$.

\item $\cH(5,\cC_i)= \cD_{i-4}$, $\frac{n+7}{2} \leq i \leq n-1$, and $\cH(5,\cC_{(n+5)/2}))=\cD_{n-4}$.

\item $\cH(6,\cC_i)= \cD_{i-5}$, $\frac{n+9}{2} \leq i \leq n-1$, $\cH(6,\cC_{(n+5)/2}))=\cD_{n-5}$, and $\cH(6,\cC_{(n+7)/2}))=\cD_{n-4}$.
\end{itemize}

One can easily verifies that these six rows form a $6 \times (2n-1)$ Latin rectangle.
It can be completed to the $(2n-1) \times (2n-1)$ Latin square $\cH$,
whose last $2n-7$ rows define the disjoint quadruples from configuration $(2,2)$ for the DB Construction.
Thus, we have proved Theorem~\ref{thm:main} for $n \equiv 1~(\text{mod}~6)$.

\begin{example}
\label{ex:LS19}
The six rows of the Latin rectangle for $n=19$ is given in the following array.
\begin{footnotesize}
$$
\begin{array}{cccccccccccccccccc}
\cR_{1,1} & \cR_{1,2} & \cR_{1,3} & \cR_{2,1} & \cR_{2,2} & \cR_{2,3} & \cdots & \cR_{9,1} & \cR_{9,2} & \cR_{9,3} & \cC_9 & \cC_{10} & \cC_{11} & \cC_{12} & \cC_{13} & \cdots & \cC_{17} & \cC_{18} \\ \hline
\cT_{1,1} & \cT_{1,2} & \cT_{1,3} & \cT_{2,1} & \cT_{2,2} & \cT_{2,3} & \cdots & \cT_{9,1} & \cT_{9,2} & \cT_{9,3} & \cD_9 & \cD_{10} & \cD_{11} & \cD_{12} & \cD_{13} & \cdots & \cD_{17} & \cD_{18} \\
\cT_{1,2} & \cT_{1,3} & \cT_{1,1} & \cT_{2,2} & \cT_{2,3} & \cT_{2,1} & \cdots & \cT_{9,2} & \cT_{9,3} & \cT_{9,1} & \cD_{18} & \cD_9 & \cD_{10} & \cD_{11} & \cD_{12} & \cdots & \cD_{16} & \cD_{17} \\
\cT_{1,3} & \cT_{1,1} & \cT_{1,2} & \cT_{2,3} & \cT_{2,1} & \cT_{2,2} & \cdots & \cT_{9,3} & \cT_{9,1} & \cT_{9,2} & \cD_{17} & \cD_{18} & \cD_9 & \cD_{10} & \cD_{11} & \cdots & \cD_{15} & \cD_{16} \\
\cD_{16} & \cD_{17} & \cD_{18} & \cT_{1,1} & \cT_{1,2} & \cT_{1,3} & \cdots & \cT_{8,1} & \cT_{8,2} & \cT_{8,3} & \cT_{9,1} & \cT_{9,2} & \cT_{9,3} & \cD_9 & \cD_{10} & \cdots & \cD_{14} & \cD_{15} \\
\cD_{17} & \cD_{18} & \cD_{16} & \cT_{1,2} & \cT_{1,3} & \cT_{1,1} & \cdots & \cT_{8,2} & \cT_{8,3} & \cT_{8,1} & \cT_{9,2} & \cT_{9,3} & \cT_{9,1} & \cD_{15} & \cD_9 & \cdots & \cD_{13} & \cD_{14} \\
\cD_{18} & \cD_{16} & \cD_{17} & \cT_{1,3} & \cT_{1,1} & \cT_{1,2} & \cdots & \cT_{8,3} & \cT_{8,1} & \cT_{8,2} & \cT_{9,3} & \cT_{9,1} & \cT_{9,2} & \cD_{14} & \cD_{15} & \cdots & \cD_{12} & \cD_{13} \\
\end{array}
$$
\end{footnotesize}
\end{example}

Finally, note that we have solved the case where $n \equiv 5~(\text{mod}~6)$ in exactly the same way as the
case $n \equiv 1~(\text{mod}~6)$. The advantage of the presented solution for $n \equiv 5~(\text{mod}~6)$
is that it provides an explicit construction of
the Latin square $\cH_{2n-1}$ from a Latin square $\cM$ of order $\frac{2n-1}{3}$.

\section{Conclusion, Discussion, and Future Research}
\label{sec:conclusion}

We have presented a recursive construction for PDQs
of order $4n$ from a set of PDQs
or order $2n$, where $n$ is congruent to 1 or 5 modulo 6. Having almost a large set
of PDQs order $2n$ the construction would yield an almost large set of PDQs order $4n$.
More precisely, if $n>5$ is an odd integer not divisible by 3, then
$$
D(4n) \geq 2n + \min \{ D(2n),2n-7 \}~.
$$

The first question that one might ask is whether the recursive formula can be improved to
$D(4n) = 2n +  \min \{ D(2n),2n-3 \}$ by partitioning the unused quadruples of configuration $(2,2)$
into four sets, each one of size $n^2(2n-1)$,
%$\binom{2n}{2}n$,
in which no triple is contained in more than one quadruple. Unfortunately, the answer
is negative. To see that one can easily verify that all the quadruples
of the form
$$
\{ (v,0),(x,0),(y,1),(z,1) \},~~\{v,x\} \subset \cA_1 \cup \cB_1 \cup \cA_2 \cup \cB_2, ~~\{y,z\} \subset \cD_0
$$
are not used in our construction,
which implies that $\cA_1 \cup \cB_1 \cup \cA_2 \cup \cB_2$ should be partitioned into four one-factors of $\Z_{2n}$
which is obviously impossible.

Next, we would like to know how the main construction is compared to previous results.
In~\cite{Etz93} we have the recursive formula
$$
D(4n) \geq 2n + \min \{ D(2n),n-2 \}~\text{for}~n \equiv 1~\text{or}~5~(\text{mod}~6)
$$
which is clearly inferior to the new result. An interesting comparison is with the lower bound on
$D(n)$ given in~\cite{EtHa91}. For $n \equiv 1$ or $5~(\text{mod}~6)$, it is proved that
$D(4n) \geq 3r$ if there exists a set of $r$ mutually 2-chromatic PDQs of order $2n$.
When $r=n$ we have that the set of $3n$ PDQs is maximal,
in the sense that the set cannot be extended to contain more PDQs. Our new result is better.
If there exists a set of $r$ PDQs of order $2n$, Theorem~\ref{thm:main} implies
that $D(4n) \geq 2n+r$ and if $r = n$ then the two results coincide, but while
the set of PDQs in~\cite{EtHa91} is maximal, our set might be extended. In fact,
if $D(2n) > n$, then Theorem~\ref{thm:main} implies $D(4n) > 3n$, while the
construction in~\cite{EtHa91} cannot be applied since in this case the PDQs of order
$2n$ cannot be mutually 2-chromatic.

There are many open questions which are remained to be answered in the future.
We list some of them in order of difficulty, from the most difficult one to the
easiest one to our opinion.
\begin{enumerate}
\item Construct an infinite family of large sets of PDQs of order $n$.
We conjecture that $n = 2^m$, $m \geq 4$, should be the easiest case, if there is an easy one.

\item Construct one nontrivial large set of SQSs of order $n$. We conjecture that $n=16$
should be the easiest target.

\item Prove that $D(2n) \geq n + D(n)$ for $n \equiv 2$ or $4~(\text{mod}~6)$.

\item Prove that $D(2n) \geq n + D(n)$ for an infinite sequence of values of $n$.

\item Prove that $D(2n) \geq n$ for all $n \equiv 1$ or $2~(\text{mod}~3)$ and $n > 4$.
Note that this holds for $n \equiv 2$ or $4~(\text{mod}~6)$.

\item Improve Theorem~\ref{thm:main} for all $n \equiv 2$ or $4~(\text{mod}~6)$.
\end{enumerate}

%%%%%%%%%%%%%%%%%%%%%%%%%%%%%%%%%%%%%%%%%%%%%%%%%%%%%%%%%%%%%%%%%%%%%%
%%%%%%%%%%%%%%%%%%%%%%%%%%%%%%%%%%%%%%%%%%%%%%%%%%%%%%%%%%%%%%%%%%%%%%
%%%%%%%%%%%%%%%%%%%%%%%%%%%%%%%%%%%%%%%%%%%%%%%%%%%%%%%%%%%%%%%%%%%%%%
%%%%%%%%%%%%%%%%%%%%%%%%%%%%%%%%%%%%%%%%%%%%%%%%%%%%%%%%%%%%%%%%%%%%%%

%\bibliography{allbib}

\end{document}